\newtheorem{thm}{Theorem}[section]
\newtheorem*{thm*}{Theorem} 
\newtheorem{cor}[thm]{Corollary}
\newtheorem{lem}[thm]{Lemma}
\newtheorem{defn}[thm]{Definition}
\newtheorem{ex}[thm]{Example}
\newtheorem{quest}[thm]{Question}
\newtheorem{prop}[thm]{Proposition}
\theoremstyle{definition}
\newtheorem*{rems*}{Remarks}
\theoremstyle{remark}
\newcommand{\R}{\mathbb{R}}
\newcommand{\Rg}{\mathcal{R}}
\newcommand{\Mx}{\mathop{\rm Mx}}
\newcommand{\Mxr}{\mathrm {Mx} _r}
\newcommand{\MxR}{\mathrm {Mx}_R}
\newcommand{\Mxrh}{\mathrm {Mx} _\rho}
\DeclareMathOperator{\Ric}{Ric}
\DeclareMathOperator{\dt}{dt}
\DeclareMathOperator{\divr}{div}
\DeclareMathOperator{\Hess}{Hess}
\DeclareMathOperator{\vol}{vol}
\DeclareMathOperator{\Vol}{vol}
\newcommand{\eps}{\varepsilon}
\newcommand{\ene}{\end{equation} }
\newcommand{\ba}{\begin{eqnarray}}
\newcommand{\ea}{\end{eqnarray}}
\newcommand{\ban}{\begin{eqnarray*}}
\newcommand{\ean}{\end{eqnarray*}}
\newcommand{\sfint}{{-}\hspace*{-0.90em}\int}
\def\co{\colon\thinspace}
\def\DD{\mathbb D}
\def\a{\alpha}
\newtheorem{remark}[thm]{Remark}
\newenvironment{rmk}{\begin{remark}\rm}{\end{remark}}
\newtheorem{Fact}[thm]{Fact}
\newtheorem{Nothing}[thm]{$\!\!\!$}
\numberwithin{equation}{section}
\begin{document}
\subjclass{53C20}
\thanks{The first author was supported in part by a Discovery grant from NSERC.
The second author was supported in part by CUNY PDAC Travel Award}
\abovedisplayskip=6pt plus3pt minus3pt \belowdisplayskip=6pt
plus3pt minus3pt
\title[On dimensions of tangent cones in limit spaces with lower Ricci curvature bounds]{On dimensions of tangent cones in limit spaces with lower Ricci curvature bounds}

\author{ Vitali Kapovitch and Nan Li}

\address{Vitali Kapovitch\\Department of Mathematics\\
University of Toronto\\
Toronto, ON, Canada M5S 2E4}\email{vtk@math.toronto.edu}

\address{Nan Li\\Department of Mathematics\\
CUNY -- New York City College of Technology\\
300 Jay Street \\
Brooklyn, NY 11201\\  USA}
\email{NLi@citytech.cuny.edu}


\begin{abstract}
  We show that if $X$ is a limit of $n$-dimensional Riemannian manifolds with Ricci curvature bounded below and $\gamma$ is a limit geodesic in $X$ then along the interior of $\gamma$ same scale measure metric tangent cones
$T_{\gamma(t)}X$ are H\"older continuous with respect to measured Gromov-Hausdorff topology and have the same  dimension in the sense of Colding-Naber.

\end{abstract}
\setcounter{page}{1} \setcounter{tocdepth}{0}

\maketitle

\section{Introduction}
In this paper we obtain new continuity results for tangent cones along interiors of limit geodesics in Gromov-Hausdorff limits of manifolds with lower Ricci curvature bounds.

Our main technical result is the following

\begin{thm}\label{thm-main} For any $H\in\R$ and $0<\delta<1/3$,
there exist $ r_0(n,\delta,H)$, $\eps(n,\delta, H)>0$ and $0<\alpha(n)<1$ such that the following holds:
\\
Suppose that $(M^n,g)$ is a complete $n$-dimensional Riemannian manifold with $\Ric_M\ge (n-1)H$ and let $\gamma\co [0,1]\to M$ be a unit speed minimizing geodesic.
Then for any $t_1,t_2\in (\delta,1-\delta)$ with $|t_1-t_2|<\eps$ and any $r<r_0$ there exist subsets $\mathcal C_i^r\subset B_r(\gamma(t_i))$   ($i=1,2$) with
\[
\frac{\vol \mathcal C_i^r}{\vol B_r(\gamma(t_i))}\ge 1-|t_1-t_2|^{\alpha(n)}
\]
and a $\left(1+|t_1-t_2|^{\alpha(n)}\right)$-Bilipschitz onto map $f_r\co \mathcal C_1^r\to \mathcal  C_2^r$, that is, $f_r$ is bijective and
\[
  \left|\frac{d(f_r(x),\,f_r(y))}{d(x,\,y)}-1\right|\le |t_1-t_2|^{\alpha(n)}
\]
for any $x,y\in \mathcal C_1^r$ with $x\neq y$.
\end{thm}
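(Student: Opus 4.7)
The plan is to adapt the Colding--Naber strategy: construct a nearly parallel gradient flow along $\gamma$, take its time-$(t_2-t_1)$ map as the bilipschitz $f_r$, and control distortion on a large-measure subset via an integral Hessian bound combined with a maximal function argument. First, set $p_\pm=\gamma(0),\gamma(1)$ and $h_\pm(x)=d(x,p_\pm)$. On a tube $T_r$ of radius $r\ll\delta$ around $\gamma([\delta/2,1-\delta/2])$, Laplacian comparison gives $\Delta h_\pm\le C(n,H,\delta)$, while the Abresch--Gromoll excess $e=h_++h_--1$ is uniformly small on $T_r$ (it vanishes on $\gamma$ and is subharmonic modulo $C$). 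Consequently $|\nabla h_-|=1$ a.e.\ and $\nabla h_-$ is close to $\dot\gamma$ in an averaged sense on $T_r$.

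\medskip

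Next, Bochner's identity applied to a small-time heat-flow smoothing $\tilde h_-$ of $h_-$, paired with a cutoff supported in $T_r$ and using $\Ric\ge(n-1)H$, the $L^\infty$ bound on $\Delta h_-$, and the excess estimate to absorb boundary terms, yields the integral Hessian bound
\[
\int_{T_{r/2}}|\Hess \tilde h_-|^2\,dv\le C(n,H,\delta)\,\vol(T_{r/2}).
\]
Let $V=\nabla\tilde h_-$ and let $\Phi_s$ be its flow. For $s:=t_2-t_1$ small and $x$ near $\gamma(t_1)$, $\Phi_s(x)$ lies near $\gamma(t_2)$. Because $V$ is a gradient field, $D\Phi_s$ satisfies the Jacobi-type inequality
\[
\tfrac{d}{ds}\log\|D\Phi_s\|\le|\Hess\tilde h_-|\circ\Phi_s,
\]
so the bilipschitz constant of $\Phi_s$ at $x$ is bounded by $\exp\int_0^s|\Hess\tilde h_-|(\Phi_\tau x)\,d\tau$, and the same bound holds for $\Phi_{-s}$ at $\Phi_s(x)$.

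\medskip

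Finally, Fubini along flow lines (equivalently, the Cheeger--Colding segment inequality adapted to the flow of $V$) converts the $L^2$ Hessian bound into
\[
\int_{B_r(\gamma(t_1))}\!\Big(\int_0^s|\Hess\tilde h_-|\circ\Phi_\tau\,d\tau\Big)^{\!2}dv\le C\,s\cdot\vol B_r(\gamma(t_1)).
\]
By Markov's inequality, the bad set $B$ on which the inner integral exceeds $s^\alpha$ has relative volume $\le Cs^{1-2\alpha}$; choosing $\alpha=\alpha(n)\in(0,1/3)$ so that both exponents dominate $\alpha(n)$ (and absorbing constants for $s<\eps$ and $r<r_0$), one sets $\mathcal C_1^r=B_r(\gamma(t_1))\setminus B$ and $\mathcal C_2^r=\Phi_s(\mathcal C_1^r)$. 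The measure estimate is immediate, and the $(1+|t_1-t_2|^{\alpha(n)})$-bilipschitz bound follows by exponentiating the Jacobi inequality in both time directions on $\mathcal C_1^r$. The main technical hurdle is the scale-invariant $L^2$ Hessian estimate: it rests on a delicate balance between the Abresch--Gromoll excess, the Laplacian comparison bound, and the cutoff geometry in the Bochner argument, and this balance ultimately determines the explicit Hölder exponent $\alpha(n)$.
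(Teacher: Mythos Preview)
Your outline correctly identifies the main ingredients (heat-flow smoothing of $d^-$, the integral Hessian bound along $\gamma$, and the gradient flow as the candidate map), but there is a genuine gap in the final step. A pointwise bound $\|D\Phi_s(x)\|\le \exp\int_0^s|\Hess\tilde h_-|(\Phi_\tau x)\,d\tau$ at points of $\mathcal C_1^r$ does \emph{not} yield a bilipschitz bound for pairs $x,y\in\mathcal C_1^r$. The first-variation formula reads
\[
\left|\frac{d}{ds}\,d(\Phi_s(x),\Phi_s(y))\right|\le \int_{\gamma_{\Phi_s(x),\Phi_s(y)}}|\Hess\tilde h_-|,
\]
which involves the Hessian along the minimizing geodesic between the two flow images, not at the endpoints. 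Since $\mathcal C_1^r$ is obtained by excising a set of small but positive measure, it is neither open nor geodesically convex, and these geodesics will typically cross the bad set where you have no control. Equivalently, your Jacobi inequality only governs infinitesimally nearby points, which says nothing about macroscopically separated points of a disconnected set.

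The paper closes this gap with two further ideas your sketch omits. First, it replaces $|\Hess h|$ by its Hardy--Littlewood maximal function $\Mx_r|\Hess h|$; the strong $L^2$ maximal inequality preserves the integrated bound. Second, it proves (adapting a lemma of Kapovitch--Wilking, by induction on scale) that smallness of $\int_0^\eps \Mx_r|\Hess h|(\Phi_\tau(x))\,d\tau$ forces, for \emph{every} radius $\rho\le r$, most of $B_\rho(x)$ to stay inside $B_{2\rho}(\Phi_s(x))$ with pairwise distance distortion at most $C\rho$ times that integral. This multi-scale control then gives the bilipschitz estimate between any two good points $x,y$ via a triangle-inequality argument with auxiliary points in $B_\rho(x)''\cap B_\rho(y)''$ located by Bishop--Gromov. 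The maximal-function step is not cosmetic: without it, ``most pairs in $B_r\times B_r$ have small distortion'' (which your Fubini argument does deliver) cannot be upgraded to ``there is a large subset on which \emph{all} pairs have small distortion''.
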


Let $d\vol_{i,r}=\frac {d\vol }{\vol B_r(\gamma(t_i))}$ ($i=1,2$) be the renormalized volume measures at $\gamma(t_i)$. It's then  obvious  that under the assumptions of the theorem  we have
\begin{align}
\left(1-C(n,\delta)|t_1-t_2|^{\alpha(n)}\right)\,d\vol_{2,r}\le (f_r)_\#(d\vol_{1,r})\le \left(1+C(n,\delta)|t_1-t_2|^{\alpha(n)}\right)\,d\vol_{2,r}\,
\end{align}
for some universal $C(n,\delta)>0$.

Let $(M_j^n,q_j)\to (X,q)$ where $\Ric_{M_j}\ge (n-1)H$.  By passing to a subsequence we can assume that the renormalized volume measures $\frac{d\vol_{M_j}}{\vol B_1(p_j)}$ on $M_j$ converge to a  measure $d\vol$ on $X$~\cite{CC1}.
For a point $x\in X$ let $\displaystyle (T_xX,o_x)=\lim_{k\to\infty} (r_kX,x)$ be a tangent cone at $x$ corresponding to some $r_k\to\infty$.

Again, up to  passing to a subsequence we can assume that the renormalized measures $\frac{d\vol}{vol B_{1/r_k}(x)}$ converge to a renormalized measure $d\vol_x$ on $T_xX$ (Note that $\vol_x(B_1(o_x))=1)$.

Given $x_1,x_2\in X$ we will call  tangent cones $(T_{x_i}X,o_{i},d\vol_i)$ $i=1,2$ together with the limit measures {\it same scale} if they come from the same rescaling sequence $r_k\to\infty$.

Using precompactenss and a standard Arzela-Ascoli type argument Theorem~\ref{thm-main} easily yields
\begin{cor}\label{cor-main}
For any $H\in\R$ and $0<\delta<1/3$,
there exist  $\eps(n,\delta, H)>0$ and $0<\alpha(n)<1$ such that the following holds:

Let $M_j^n\to X$ where $\Ric_{M_j}\ge (n-1)H$.   Let $\gamma\co [0,1]\to X$ be a unit speed geodesic which is a limit of geodesics in $M_i$. Let $d\vol$ be a renormalized limit volume measure on $X$.

 Then for any $t_1,t_2\in (\delta,1-\delta)$ with $|t_1-t_2|<\eps$  there exist  subsets  $\mathcal C_i$ ($i=1,2$) in the unit ball around the origin $o_i$ in the same scale tangent cones  $(T_{\gamma(t_i)}X, d\vol_i)$ ($i=1,2$) such that
\[
{\vol_i \mathcal C_i}\ge 1-|t_1-t_2|^{\alpha(n)}
\]
and there exists a map $f\co \mathcal C_1\to \mathcal C_2$ satisfying
\begin{enumerate}
  \renewcommand{\labelenumi}{(\roman{enumi})}
  \item $f$ is $(1+|t_1-t_2|^{\alpha(n)})$-Bilipschitz onto;
  \item
      $$(1-|t_1-t_2|^{\alpha(n)})\,d\vol_2\le f_\#(d\vol_1)\le (1+|t_1-t_2|^{\alpha(n)})\,d\vol_2.$$
      In particular, $f_\#(d\vol_1)$ ($f^{-1}_\#(d\vol_2)$)  is absolutely continuous with respect to $\vol_2$ ($\vol_1$).
\end{enumerate}

\end{cor}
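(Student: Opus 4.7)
The approach is a two-level Gromov-Hausdorff passage to the limit, turning Theorem \ref{thm-main} (which lives on the approximating manifolds at a fixed small scale) into a statement about the same-scale tangent cones of $X$. Fix $t_1,t_2\in(\delta,1-\delta)$ with $|t_1-t_2|<\eps$. Lift $\gamma$ to unit speed minimizing geodesics $\gamma_j\co[0,1]\to M_j$ converging to $\gamma$. Fix a rescaling sequence $r_k\to\infty$ realizing the same-scale tangent cones $(T_{\gamma(t_i)}X,o_i,d\vol_i)$ for $i=1,2$, and set $s_k=1/r_k$. For $k$ large enough that $s_k<r_0(n,\delta,H)$, Theorem \ref{thm-main} applied to $(M_j,\gamma_j)$ at radius $s_k$ produces subsets $\mathcal C_i^{j,k}\subset B_{s_k}(\gamma_j(t_i))$ of renormalized volume at least $1-|t_1-t_2|^{\alpha(n)}$ together with a $(1+|t_1-t_2|^{\alpha(n)})$-bilipschitz onto map $f^{j,k}\co \mathcal C_1^{j,k}\to \mathcal C_2^{j,k}$ satisfying the pushforward estimate displayed immediately after Theorem \ref{thm-main}.

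Next I would pass to the limit $j\to\infty$ at each fixed scale $s_k$. Since the maps $f^{j,k}$ are uniformly bilipschitz, a standard Gromov-Hausdorff Arzel\`a-Ascoli extraction applied along countable dense subsets of the $\mathcal C_i^{j,k}$ yields, up to a subsequence, a bilipschitz onto limit map $f^k\co \mathcal C_1^k\to \mathcal C_2^k$, where the subsets $\mathcal C_i^k$ sit in the unit ball around $\gamma(t_i)$ in the rescaled pointed measured limit $(s_k^{-1}X,\gamma(t_i),d\vol_{i,s_k})$. The lower bound on renormalized volumes and the pushforward estimate both survive, thanks to weak convergence of the renormalized volume measures and tightness from Bishop-Gromov. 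Then I would pass $k\to\infty$: by the definition of same-scale tangent cones, $(s_k^{-1}X,\gamma(t_i),d\vol_{i,s_k})$ converges in the pointed measured Gromov-Hausdorff sense to $(T_{\gamma(t_i)}X,o_i,d\vol_i)$, so repeating the Arzel\`a-Ascoli extraction on $\{f^k\}_k$ produces the desired map $f\co \mathcal C_1\to \mathcal C_2$ between subsets of unit balls in the tangent cones, with the bilipschitz constant and both measure bounds (i), (ii) inherited in the limit.

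The main technical obstacle I expect is the measure book-keeping across the two limits. Since the maps $f^{j,k}$ are only defined on subsets, convergence must be organized through $\eps$-nets inside the $\mathcal C_i^{j,k}$, and one must ensure that the limit domains $\mathcal C_i$ still carry renormalized measure at least $1-|t_1-t_2|^{\alpha(n)}$ and that the two-sided pushforward estimate survives. I would arrange this by replacing each $\mathcal C_i^{j,k}$ by its closure without loss (the volume and bilipschitz bounds are stable under closure), applying lower semicontinuity of weak limits of the renormalized measures to transfer the measure lower bound on $\mathcal C_i^{j,k}$ to $\mathcal C_i$, and testing the two-sided pushforward inequality against continuous compactly supported functions on $\eps$-nets to transfer it to $f_\#(d\vol_1)$ in the final limit. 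The absolute continuity in (ii) then follows at once from the two-sided bound.
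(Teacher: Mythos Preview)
Your proposal is correct and follows exactly the approach the paper indicates: the paper states that Corollary~\ref{cor-main} follows from Theorem~\ref{thm-main} by ``precompactness and a standard Arzela--Ascoli type argument,'' and your two-level Gromov--Hausdorff passage (first $j\to\infty$ from $M_j$ to $X$ at each fixed scale $s_k$, then $k\to\infty$ to the same-scale tangent cones), together with the Arzel\`a--Ascoli extraction of the uniformly bilipschitz maps and weak convergence of the renormalized measures, is precisely that argument carried out in detail.
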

In~\cite{CoNa} Colding and Naber show that under the assumptions of Corollary~\ref{cor-main} same scale tangent cones along $\gamma$ vary H\"older continuously in $t$.
  Corollary~\ref{cor-main} implies that  H\"older continuity of tangent cones also holds in measure-metric sense with respect to the renormalized limit volume measures on the tangent cones.
This does not follow from the results of ~\cite{CoNa} which do not address measured continuity. Since   same scale tangent cones do not need to exists for all $t$ for any given scaling sequence, we state the H\"older continuity quanitatively using Sturm distance $\DD$ which metrizes the measured Gromov-Hausdorff topology on the class of spaces in question~\cite[Lemma 3.7]{Sturm06}.

\begin{cor}\label{mm-cont-1}
There exist $\eps=\eps(n,\delta, H)>0 , 0<\alpha(n)<1$ such that the following holds.

Let $M_j^n\to X$ where $\Ric_{M_j}\ge (n-1)H$.   Let $\gamma\co [0,1]\to X$ be a unit speed geodesic which is a limit of geodesics in $M_i$.
Then for any $t_1,t_2\in (\delta,1-\delta)$ with $|t_1-t_2|<\eps$
 we have that $$\DD((B_1(o_1),d\vol_1), (B_1(o_2),d\vol_2))\le |t_1-t_2|^{\alpha(n)}$$  where
 $(T_{\gamma(t_1)} X, d\vol_1), (T_{\gamma(t_2)} X, d\vol_2)$ are same scale tangent cones and 
$B_1(o_i)\subset T_{\gamma(t_i)} X$ is the unit ball around the vertex in $T_{\gamma(t_i)} X$.

\end{cor}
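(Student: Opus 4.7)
\emph{Proof sketch.}
The plan is to deduce the Sturm-distance bound directly from Corollary~\ref{cor-main} by using the $(1+\delta)$-bilipschitz, near-measure-preserving map $f\co \mathcal C_1 \to \mathcal C_2$ produced there as the backbone of both a pseudometric coupling of $B_1(o_1)$ and $B_1(o_2)$ and a probability coupling of the normalized measures $\vol_1|_{B_1(o_1)}$ and $\vol_2|_{B_1(o_2)}$. Throughout I write $\delta := |t_1-t_2|^{\alpha(n)}$, where $\alpha(n)$ is the exponent of Corollary~\ref{cor-main}. Since $\DD$ is the $L^2$-transportation distance between normalized metric measure spaces, it suffices to exhibit one admissible pair $(\hat d, q)$ consisting of a pseudometric on the disjoint union extending $d_1$ and $d_2$ together with a probability coupling of the two measures, and to bound $\int \hat d^2\,dq$.

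For the pseudometric, I would define $\hat d$ on $B_1(o_1)\sqcup B_1(o_2)$ to restrict to $d_i$ on each factor, and for mixed pairs $x\in B_1(o_1),\ y\in B_1(o_2)$ set
\[
\hat d(x,y) \;:=\; \delta + \inf_{z\in\mathcal C_1}\bigl(d_1(x,z) + d_2(f(z),y)\bigr).
\]
The additive $\delta$-buffer is what makes the triangle inequality hold in the only delicate case, namely when the ``middle'' point lies on the opposite side from the endpoints. For $a,c\in B_1(o_1)$ and $b\in B_1(o_2)$, for instance, one estimates
\[
\hat d(a,b)+\hat d(b,c) \;\ge\; 2\delta + \inf_{w,w'\in \mathcal C_1}\bigl(d_1(a,w)+(1-\delta)d_1(w,w')+d_1(w',c)\bigr),
\]
and the $2\delta$ absorbs the distortion $\delta\cdot d_1(w,w')\le 2\delta$ coming from $\diam B_1(o_i)\le 2$. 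In particular $\hat d(x,f(x))\le \delta$ for all $x\in \mathcal C_1$.

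For the probability coupling, set $\sigma_i := \vol_i|_{\mathcal C_i}$ and $\tau := \min(f_\#\sigma_1, \sigma_2)$, a submeasure of $\sigma_2$ on $\mathcal C_2$. By Corollary~\ref{cor-main}(ii) we have $f_\#\sigma_1\ge (1-\delta)\sigma_2$, so $|\tau|\ge (1-\delta)\vol_2(\mathcal C_2)\ge 1-2\delta$. Place mass $\tau$ along the graph of $f^{-1}$ to form $q_1 := (f^{-1},\id)_\#\tau$, and couple the residual marginals $\vol_1|_{B_1(o_1)}-(f^{-1})_\#\tau$ and $\vol_2|_{B_1(o_2)}-\tau$ (both of total mass $1-|\tau|\le 2\delta$) by any coupling $q_2$, e.g.\ the normalized product. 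Then $q := q_1+q_2$ is a probability coupling of $\vol_1|_{B_1(o_1)}$ and $\vol_2|_{B_1(o_2)}$, and using $\hat d(x,f(x))\le \delta$ on the support of $q_1$ together with a trivial pointwise bound $\hat d\le C(n)$ on the support of $q_2$ gives
\[
\int \hat d^2\,dq \;\le\; \delta^2\cdot 1 + C(n)^2 \cdot 2\delta \;\le\; C'(n)\,\delta,
\]
so that $\DD \le C'(n)^{1/2}\cdot |t_1-t_2|^{\alpha(n)/2}$.

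The remaining cosmetic issue is that the exponent has dropped to $\alpha(n)/2$ and a multiplicative constant has appeared; both are absorbed by replacing $\alpha(n)$ in the statement by, say, $\alpha(n)/3$ and shrinking $\eps$ so that $C'(n)^{1/2}\cdot x^{\alpha(n)/2}\le x^{\alpha(n)/3}$ for all $0<x<\eps$. The main point requiring care is the triangle-inequality verification for $\hat d$, where the bilipschitz (rather than isometric) nature of $f$ forces the $\delta$-buffer; everything else is a routine assembly of an $L^2$ transportation coupling from a near-isometric, near-measure-preserving map on sets of almost full measure.
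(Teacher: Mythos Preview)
Your proposal is correct and is essentially the approach the paper intends: the paper does not give an explicit proof of Corollary~\ref{mm-cont-1} but simply states that it follows from Corollary~\ref{cor-main} together with the fact that Sturm's $\DD$ metrizes the measured Gromov--Hausdorff topology~\cite[Lemma 3.7]{Sturm06}. Your argument supplies exactly the standard details behind that claim --- building an admissible pseudometric and coupling from the near-isometric, near-measure-preserving map $f$ of Corollary~\ref{cor-main} --- and the exponent/constant adjustment at the end is the expected bookkeeping.
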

\begin{rmk}

Note that Bishop-Gromov volume comparison implies that in Corollary~\ref{cor-main} the set $C_i$ is  $C(n)|t_1-t_2|^{\alpha(n)/n}$ dense in $B_1(o_i)$ for $i=1,2$ and hence same scale tangent cones $T_{\gamma(t)}X$ are H\"older continuos in the pointed Gromov-Hausdorff topology. Of course, this is already known by ~\cite{CoNa}.
\end{rmk}

Let $X$ be a limit of $n$-manifolds with Ricci curvature bounded below. Recall that a point $p\in X$ is called $k$-regular if {\it every} tangent cone $T_pX$ is isometric to $\R^k$.
The collection of all $k$-regular points is denoted by $\Rg_k(X)$. (When the space $X$ in question is clear we will sometimes simply write  $\Rg_k$).

The set of regular points of $X$ is the union
\begin{align}
\Rg(X)\equiv \cup_k\Rg_k(X)\,.
\end{align}
The set of singular points $\mathcal S$ is the complement of the set of regular points.
It was proved in ~\cite{CC1} that $\vol(\mathcal S)=0$ with respect to any renormalized limit volume measure $d\vol$ on $X$.
Moreover, by ~\cite[Theorem 4.15]{CC3},   $\dim_{Haus} \Rg_k\le k$ and $d\vol$ is absolutely continuous on $\Rg_k(X)$  with respect to the $k$-dimensional Hausdorff measure. In particular,
\begin{equation}\label{eq-dim-reg}
\dim_{Haus} \Rg_k= k\quad \text{ if }
\vol(\Rg_k)>0.
\end{equation}
It was further shown in~\cite[Theorem 1.18]{CoNa} that there exists unique  integer $k$ such that
\begin{align}
\vol(\Rg_k)>0\, .
\end{align}
Altogether this implies that there exists unique integer $k$ such that
\begin{equation}\label{eq-vol-reg1}
\vol (X\backslash \Rg_k)=0\, .
\end{equation}
Moreover, it can be shown (Theorem~\ref{thm-dim} below) that this $k$ is equal to the largest integer $m$ for which $\Rg_m$ is non-empty.
Following Colding and Naber we will call this $k$ {\it the dimension of $X$} and denote it by $\dim X$.
(Note that it is not known to be equal to the Hausdorff dimension of $X$ in the collapsed case).

Corollary~\ref{cor-main} immediately implies
\begin{thm}\label{thm-main2}
Under the assumptions of Corollary~\ref{cor-main}  the dimension of same scale tangent cones $T_{\gamma(t)} X$ is constant for $t\in (0,1)$.
\end{thm}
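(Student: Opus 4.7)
The plan is to show the Colding-Naber dimension function $t\mapsto \dim T_{\gamma(t)}X$ is locally constant on $(0,1)$ and then invoke connectedness. Fix $t_0\in(0,1)$, choose $\delta$ with $t_0\in(\delta,1-\delta)$, and let $\eps,\alpha(n)$ be as in Corollary~\ref{cor-main}. For $t\in(\delta,1-\delta)$ with $|t-t_0|<\eps$, set $\eta:=|t-t_0|^{\alpha(n)}$, $Y_1:=T_{\gamma(t_0)}X$, $Y_2:=T_{\gamma(t)}X$, $k_i:=\dim Y_i$, and let $f\colon \mathcal C_1\to\mathcal C_2$ be the measured $(1+\eta)$-Bilipschitz map from Corollary~\ref{cor-main}, where $\mathcal C_i\subset B_1(o_i)\subset Y_i$ satisfy $\vol_i(\mathcal C_i)\ge 1-\eta$ and $f_\# d\vol_1$ agrees with $d\vol_2$ up to a multiplicative factor $1\pm C\eta$. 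The goal is to prove $k_1=k_2$.

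Since tangent cones of Ricci limits are again Ricci limits, \eqref{eq-vol-reg1} applied to $Y_i$ yields $\vol_i(B_1(o_i)\setminus \Rg_{k_i}(Y_i))=0$. Combining this with the measure bounds on $\mathcal C_i$ and the two-sided measure comparison for $f$, the set
\[
A:=\{p\in\mathcal C_1\cap\Rg_{k_1}(Y_1)\,:\,f(p)\in\mathcal C_2\cap\Rg_{k_2}(Y_2)\}
\]
has $\vol_1$-measure at least $1-C'\eta>0$ once $\eta$ is small. Since the renormalized volumes are doubling by Bishop-Gromov, the Lebesgue differentiation theorem gives that $\vol_1$-a.e.\ point of $\mathcal C_1$ is a density point of $\mathcal C_1$, and similarly for $\mathcal C_2$; transporting the $\mathcal C_2$-density condition back through $f$ by measure comparison, I select $p\in A$ that is itself a density point of $\mathcal C_1$ and whose image $q:=f(p)$ is a density point of $\mathcal C_2$.

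Next I would blow up at $p$ and $q$ along a sequence $r_j\to 0$. Because $p\in\Rg_{k_1}(Y_1)$ and $q\in\Rg_{k_2}(Y_2)$, the pointed measured Gromov-Hausdorff limits of $(r_j^{-1}Y_i,\cdot)$ are $(\R^{k_1},0,c_1\mathcal L^{k_1})$ and $(\R^{k_2},0,c_2\mathcal L^{k_2})$ respectively. The density point property at $p$ guarantees that $r_j^{-1}\mathcal C_1\cap B_R(p)$ converges, in the measured GH sense, to a subset $E_1\subset B_R(0)\subset\R^{k_1}$ of full $\mathcal L^{k_1}$-measure for any fixed $R$; similarly $r_j^{-1}\mathcal C_2\cap B_R(q)$ converges to a full-measure subset of $B_R(0)\subset\R^{k_2}$. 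The rescaled maps $r_j^{-1}f$ remain $(1+\eta)$-Bilipschitz, so an Arzela-Ascoli extraction for Bilipschitz maps between GH-convergent spaces produces a subsequential $(1+\eta)$-Bilipschitz limit $F\colon E_1\to \R^{k_2}$. Rescaling the measure comparison $\vol_2(f(\mathcal C_1\cap B_r(p)))\ge(1-C\eta)\vol_1(\mathcal C_1\cap B_r(p))$ and using bijectivity of $f$ shows that $F(E_1)$ has positive $\mathcal L^{k_2}$-measure.

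Finally, $E_1$ has positive $\mathcal L^{k_1}$-measure so $\dim_{\mathrm{Haus}}E_1=k_1$, and Bilipschitz invariance of Hausdorff dimension gives $\dim_{\mathrm{Haus}}F(E_1)=k_1$; since $F(E_1)\subset\R^{k_2}$ has positive $\mathcal L^{k_2}$-measure, also $\dim_{\mathrm{Haus}}F(E_1)=k_2$, so $k_1=k_2$. This establishes local constancy of $t\mapsto\dim T_{\gamma(t)}X$ on $(\delta,1-\delta)$, and letting $\delta\to 0$ yields constancy on $(0,1)$ by connectedness. The main obstacle is the passage-to-limit step: one must verify that $F(E_1)$ has positive Lebesgue measure in $\R^{k_2}$ rather than being Lebesgue-negligible (and thus of Hausdorff dimension strictly less than $k_2$), and this is precisely what the measure comparison in Corollary~\ref{cor-main}, combined with the density point property at $q$, supplies.
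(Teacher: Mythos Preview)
Your argument is correct in outline but takes a substantially longer route than the paper. Both proofs establish local constancy by showing $k_1=k_2$ for nearby $t_1,t_2$, and both ultimately rest on the fact that Lipschitz maps do not raise Hausdorff dimension; the difference is that the paper avoids your blow-up entirely. After intersecting each $\mathcal C_i$ with $\Rg_{k_i}(Y_i)$ (which loses no measure, by \eqref{eq-vol-reg1} and Corollary~\ref{cor-main}(ii)), the paper directly invokes the result from \cite{CC3}, recorded just above \eqref{eq-dim-reg}, that $d\vol_i$ is absolutely continuous with respect to $k_i$-dimensional Hausdorff measure on $\Rg_{k_i}$. If $k_1<k_2$, then $f(\mathcal C_1)=\mathcal C_2$ has Hausdorff dimension at most $k_1$, hence zero $k_2$-dimensional Hausdorff measure, hence $\vol_2(\mathcal C_2)=0$, contradicting $\vol_2(\mathcal C_2)>0$. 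That is the entire proof. Your blow-up at a regular density point is, in effect, rederiving this absolute continuity at a single point; while it can be made to work, the Arzel\`a--Ascoli extraction of $F$ on varying domains and the verification that $F(E_1)$ has positive $\mathcal L^{k_2}$-measure (which you rightly flag as the delicate step) are technical overhead that the direct citation of \cite{CC3} sidesteps completely. What your route buys is independence from that specific structure theorem, at the cost of a considerably more involved limiting argument.
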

\begin{proof}
For $t_2$ sufficiently close to $t_1=t$ let $\mathcal C_i\subset B_1(o_i)$ ($i=1,2$), $f\co \mathcal C_1\to \mathcal C_2$ be provided by Corollary ~\ref{cor-main}.
Let $k_i=\dim T_{\gamma(t_i)}X$. Suppose $k_1\ne k_2$, say $k_1<k_2$.  By using \eqref{eq-vol-reg1} and Corollary ~\ref{cor-main}~(ii) we can assume that $ C_i\subset \mathcal R_{k_i}(T_{\gamma(t_i)}X)$.
By above this means that $\dim_{Haus}\mathcal C_i=k_i$.

Since $f$ is Lipschitz  we have $\dim_{Haus}(f(\mathcal C_1))\le\dim_{Haus}\mathcal C_1=k_1$. Since $d\vol_2$ is absolutely continuous with respect to the $k_2$-dim Hausdorff measure on $\mathcal R_{k_2}$ and $k_2>k_1$  this implies that
$\vol_2 f(\mathcal C_1)=0$. This is a contradiction since $\vol_2 f(\mathcal C_1)=\vol_2 (\mathcal C_2)>0$.
\end{proof}

Note that a ``cusp" can exist in the limit space of manifolds with lower Ricci curvature bound, for example, a horn ~\cite[Example 8.77]{CC1}. Theorem \ref{thm-main2} indicates that a "cusp" cannot occur in the interior of limit geodesics. In particular, it provides a new way to rule out the trumpet ~\cite[Example 5.5]{CC2} and its generalizations ~\cite[Example 1.15]{CoNa}. Moreover, it shows that the following example cannot arise as a Gromov-Hausdorff limit of manifolds with lower Ricci bound, even through the tangent cones are H\"older (in fact, Lipschitz) continuous along the interior of  geodesics. This example cannot be ruled out by previously known results.

\begin{ex}
Let $Y=\Big\{(x,y,z)\in\R^3: z\ge \sqrt{x^4+|y|}-x^2\Big\}$.

\begin{figure}
\includegraphics[scale=0.35]{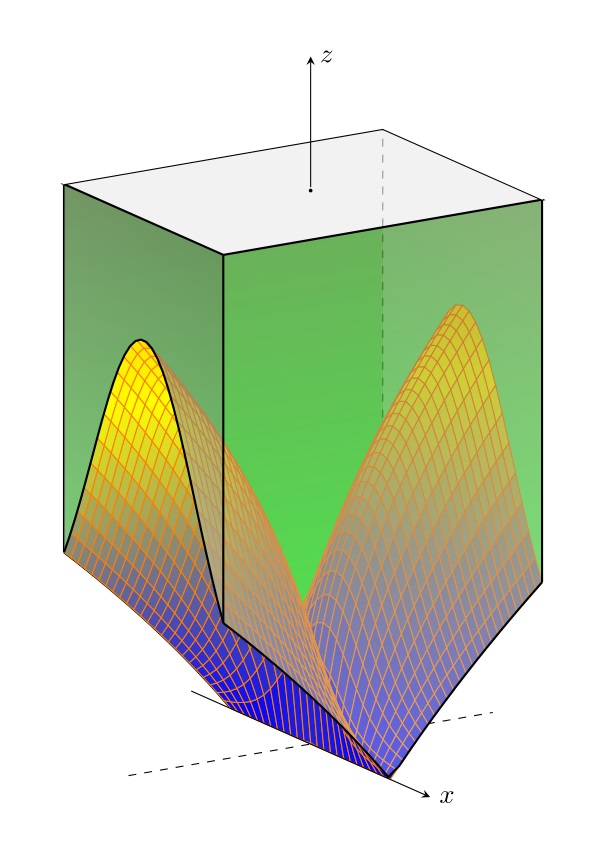}
\caption{$Y=\Big\{(x,y,z)\in\R^3: z\ge \sqrt{x^4+|y|}-x^2\Big\}$}
\end{figure}

Then $T_{(x,0,0)}Y=\Big\{(y,z)\in\mathbb R^2: z\ge \frac{|y|}{2x^2}\Big\}\times \R$ for $x\ne 0$ and $T_{(0,0,0)}Y=\R_+\times\R$.
Let $X$ be the double of $Y$ along its boundary. Then all points not on the $x$-axis are in $\Rg_3$ and along the $x$-axis
we have that for $x\ne 0$,  $T_{(x,0,0)}X=$  (double of $\Big\{(y,z)\in\mathbb R^2: z\ge \frac{|y|}{2x^2}\Big\}) \times \R$ (i.e. it's a cone $\times \R$) degenerating to
 $T_{(0,0,0)}X=\R_+\times\R$ .

 So $\dim T_{(0,0,0)}X=2$ but $\dim T_{(x,0,0)}X=3$ for $x\ne 0$. Lastly,
any segment of the geodesic $\gamma(t)=(t,0,0)$ is unique shortest between its end points and hence it's a limit geodesic if $Y$ is a limit of manifolds with $\Ric\ge -(n-1)H$. Hence Theorem  \ref{thm-main2} is applicable to $\gamma$ and therefore $X$ is not a limit of $n$-manifolds with $\Ric\ge -(n-1)H$.

Note that one can further smooth out the metric on $X$ along $\partial Y\backslash \{ x-$axis$\}$  to obtain a space  $X_1$ with similar properties  but which in addition is a smooth Riemannian manifold away from the $x$-axis. In particular $X_1$ is non-branching.

\end{ex}

\newpage

Next we want to mention several semicontinuity results about the Colding-Naber dimension  which further  suggest that this notion is a natural one.

Let  $\mathcal M^n$ be the space of pointed  Gromov-Hausdorff limits of manifolds with $\Ric\ge-(n-1)$.
Recall the following notions from~\cite{CC1}
\begin{defn}
Let $X\in \mathcal M^n$.
\begin{itemize}
 \item $\mathcal {WE}_k(X)=\{x\in X|$ such that {\rm some} tangent cone $T_xX$ splits off isometrically as $\R^k\times Y\}$. 
 \item $\mathcal E_k(X) = \{x \in X |$ such that   {\rm every} tangent cone $T_xX$ splits off isometrically as $\R^k\times Y\}$.
 \item $(\mathcal {WE}_k)_\eps(X)=\{x\in X|$ such that there exist $0<r\le 1$, $Y$ and $q\in \R^k\times Y$ such that $d_{G-H}(B_r(x),B_r^{\R^k\times Y}(q))<\eps r\}$.

 \end{itemize}
 \end{defn}

 By~\cite[Lemma 2.5]{CC1} there exists $\eps(n)>0$ such that if $p\in  (\mathcal {WE}_k)_\eps(X)$ for some $\eps\le \eps(n)$ then $\vol B_r(p)\cap\mathcal E_k>0$ for all sufficiently small $r$.

 Suppose  $(X_i, p_i)\in \mathcal M^n$, $(X_i, p_i)\to  (X,p)$ and $p\in \Rg_k(X)$. Then   $p\in (\mathcal {WE}_k)_{\eps(n)}(X)$ which obviously implies   that $p_i\in  (\mathcal {WE}_k)_{\eps(n)}(X_i)$ for all large $i$ as well.
By above this implies that  $\vol \mathcal E_k(X_i)>0$ for all large $i$.

 This together with ~\eqref{eq-vol-reg1} yields the following result of Honda proved in~\cite[Prop 3.78]{Honda} using very different tools.

 \begin{thm}~\cite[Prop 3.78]{Honda} \label{thm-dim-semicont}
Let $X_i\in\mathcal M^n$  and $\dim X_i=k$.
Let $(X_i,p_i)\overset{G-H}{\longrightarrow}  (X, p)$. Then $\dim X\le k$.
In other words, the dimension function is lower semicontinuous on $\mathcal M^n$ with respect to the Gromov-Hausdorff topology.
  \end{thm}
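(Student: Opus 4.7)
The plan is to argue by contradiction: assume $\dim X = m > k$ and produce an impossible point in some $X_i$ that lies simultaneously in $\mathcal E_m(X_i)$ and $\mathcal R_k(X_i)$. By the definition of Colding-Naber dimension, $\dim X = m$ forces $\vol \mathcal R_m(X) > 0$, so I can pick a regular point $p' \in \mathcal R_m(X)$. Since every tangent cone at $p'$ is isometric to $\R^m$, for any $\eps > 0$ there is a scale $r = r(p',\eps) \le 1$ with $d_{GH}(B_r(p'), B_r^{\R^m}(0)) < \eps r$, i.e.\ $p' \in (\mathcal{WE}_m)_\eps(X)$.

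Next I would transfer this property to $X_i$. Using the pointed Gromov-Hausdorff convergence $(X_i,p_i) \to (X,p)$, lift $p'$ to a sequence $p_i' \in X_i$ with $p_i' \to p'$; then $(X_i,p_i') \to (X,p')$ pointedly, so $B_r(p_i')$ converges in Gromov-Hausdorff sense to $B_r(p')$. Fix $\eps_0 \le \eps(n)/2$ in the above (with $\eps(n)$ as in the invoked [CC1, Lemma 2.5]); by the triangle inequality for Gromov-Hausdorff distance, we get
\[
d_{GH}(B_r(p_i'), B_r^{\R^m}(0)) < \eps(n)\, r
\]
for all sufficiently large $i$, which says exactly that $p_i' \in (\mathcal{WE}_m)_{\eps(n)}(X_i)$. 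Applying the cited [CC1, Lemma 2.5] yields $\vol(\mathcal E_m(X_i) \cap B_r(p_i')) > 0$ for all large $i$.

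Now I combine this with the dimensional assumption on $X_i$. Since $\dim X_i = k$, equation~\eqref{eq-vol-reg1} gives $\vol(X_i \setminus \mathcal R_k(X_i)) = 0$, so inside the positive-measure set $\mathcal E_m(X_i) \cap B_r(p_i')$ I can find a point $x \in \mathcal E_m(X_i) \cap \mathcal R_k(X_i)$. By the definitions, every tangent cone at $x$ is on the one hand isometric to $\R^k$ (since $x \in \mathcal R_k$), and on the other hand splits off a Euclidean $\R^m$ factor (since $x \in \mathcal E_m$). The only way $\R^k$ can split off $\R^m$ isometrically is if $m \le k$, contradicting $m > k$.

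The only nontrivial ingredient in this plan is the $(\mathcal{WE}_m)_{\eps(n)} \Rightarrow \vol \mathcal E_m > 0$ implication, which is precisely the content of the already-invoked [CC1, Lemma 2.5]; all other steps are stability of almost-splitting under pointed Gromov-Hausdorff convergence plus the definitional incompatibility of $\mathcal E_m$ and $\mathcal R_k$ for $m > k$. So modulo the Cheeger-Colding almost-splitting tool, the argument reduces to unwinding the definitions, as sketched in the paragraph preceding the statement.
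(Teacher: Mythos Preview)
Your proposal is correct and follows essentially the same approach as the paper: the argument sketched in the paragraph immediately preceding the theorem statement is exactly what you have written out in detail---pick a point in $\mathcal R_m(X)$, use stability of the $(\mathcal{WE}_m)_{\eps(n)}$ condition under Gromov--Hausdorff convergence, apply \cite[Lemma 2.5]{CC1} to get $\vol\mathcal E_m(X_i)>0$, and then intersect with $\mathcal R_k(X_i)$ via \eqref{eq-vol-reg1} to reach a contradiction. Your write-up simply makes explicit the basepoint change $p\to p'$ and the final incompatibility step that the paper leaves to the reader.
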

This theorem, applied to the convergence $(\frac 1{r}X,p)\underset{r\to 0}{\longrightarrow} (T_pX,o)=(\R^k,0)$ for $p\in\Rg_k$, immediately gives
 the following result which  also directly follows from ~\cite[Prop 3.1]{Honda1} and~\eqref{eq-vol-reg1}.
\begin{thm}\label{thm-dim}
Let $X\in \mathcal M^n$.  Then $\dim X$ is equal to the largest $k$ for which $\Rg_k(X)\ne \varnothing$.
\end{thm}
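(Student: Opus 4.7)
The plan is to establish both inequalities in the identity $\dim X = m_*$, where $m_* := \max\{m : \Rg_m(X) \ne \varnothing\}$. Write $k := \dim X$. The direction $k \le m_*$ is immediate from the definition of $\dim X$: one has $\vol(\Rg_k) > 0$, so $\Rg_k$ is nonempty and hence $k \le m_*$.

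For the reverse inequality $m_* \le k$, the strategy is to invoke the semicontinuity of dimension (Theorem~\ref{thm-dim-semicont}) applied to the standard blow-up sequence at a regular point, exactly as indicated in the sentence preceding the statement. Namely, I pick any $p \in \Rg_{m_*}(X)$; by definition \emph{every} tangent cone at $p$ is isometric to $\R^{m_*}$, so $(r^{-1}X, p) \to (\R^{m_*}, 0)$ as $r \to 0^+$. Each rescaled space $r^{-1}X$ (for $r \le 1$) still lies in $\mathcal M^n$, since scaling distances by $r^{-1}$ multiplies the Ricci tensor by $r^2 \le 1$ and so the bound $\Ric \ge -(n-1)$ is preserved. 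Moreover $\dim(r^{-1}X) = \dim X = k$, because the Colding--Naber dimension is manifestly scale-invariant: rescaling the ambient space preserves every regular stratum $\Rg_j$ together with the limit-volume class. Applying Theorem~\ref{thm-dim-semicont} to this convergent sequence then yields $m_* = \dim(\R^{m_*}) \le k$, as required.

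I do not anticipate a real obstacle here. Once the semicontinuity of dimension on $\mathcal M^n$ is granted, the only (trivial) bookkeeping is the scale-invariance of the Colding--Naber dimension, after which the theorem is essentially a one-line application of Theorem~\ref{thm-dim-semicont} to the blow-up sequence at a regular point.
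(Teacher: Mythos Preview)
Your proof is correct and follows exactly the approach indicated in the paper: the paper's proof is the single sentence preceding the theorem, namely applying Theorem~\ref{thm-dim-semicont} to the blow-up convergence $(\tfrac{1}{r}X,p)\to(\R^k,0)$ at a point $p\in\Rg_k$, which is precisely what you do (taking $k=m_*$). Your additional remarks verifying that the rescaled spaces remain in $\mathcal M^n$ and that the Colding--Naber dimension is scale-invariant are the only details the paper leaves implicit.
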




Another immediate consequence of Theorem \ref{thm-dim-semicont} is the following

\begin{cor}~\cite[Prop 3.78]{Honda} \label{cor-dim-tcone}
  Let $X\in \mathcal M^n$. Then for any $x\in X$  and any tangent cone $T_xX$ it holds that
  \[
  \dim T_xX\le \dim X\, .
  \]
\end{cor}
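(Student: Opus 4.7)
The plan is to realize $T_xX$ as a pointed Gromov--Hausdorff limit of a sequence of spaces inside $\mathcal M^n$ each of which has the same dimension as $X$, and then invoke the lower semicontinuity of dimension from Theorem~\ref{thm-dim-semicont}.

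By definition of the tangent cone we can write $(T_xX, o_x) = \lim_{k\to\infty}(r_kX, x)$ for some sequence $r_k\to\infty$. Assume $X$ is a pointed limit of manifolds $(M_j^n,q_j)$ with $\Ric_{M_j}\ge(n-1)H$ for some $H\in\R$. The rescaled spaces $(r_kM_j^n, q_j)$ are $n$-manifolds with $\Ric\ge (n-1)H/r_k^2$, and by a standard diagonal argument $(r_kX, x)$ is itself a pointed Gromov--Hausdorff limit of such rescaled manifolds. Since $r_k\to\infty$, for all sufficiently large $k$ we have $H/r_k^2\ge -1$, so that $r_kX\in \mathcal M^n$.

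Next I would check that dimension is invariant under global rescaling, i.e. $\dim(r_kX)=\dim X$. This follows directly from the definitions: tangent cones at a point commute with rescaling of the ambient space, so $\mathcal R_m(r_kX)=\mathcal R_m(X)$ as subsets of the underlying space for every $m$, and the renormalized limit measures also transform trivially under scaling. Hence the unique integer $m$ singled out by \eqref{eq-vol-reg1} is the same for $X$ and for $r_kX$.

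Finally, applying Theorem~\ref{thm-dim-semicont} to the convergence $(r_kX,x)\to (T_xX,o_x)$ inside $\mathcal M^n$ gives
\[
\dim T_xX \le \dim(r_kX) = \dim X,
\]
which is the desired inequality. There is no real obstacle beyond verifying the two bookkeeping points above (that the rescaled spaces indeed belong to $\mathcal M^n$ and that dimension is invariant under scaling); the main content is entirely carried by the semicontinuity theorem.
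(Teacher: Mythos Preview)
Your proposal is correct and follows exactly the approach the paper has in mind: the paper simply declares the corollary to be ``another immediate consequence of Theorem~\ref{thm-dim-semicont}'' without writing out a proof, and the argument you supply---applying Theorem~\ref{thm-dim-semicont} to the rescalings $(r_kX,x)\to (T_xX,o_x)$ after noting that $r_kX\in\mathcal M^n$ for large $k$ and that $\dim(r_kX)=\dim X$---is precisely how that implication unpacks.
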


  It is obvious from ~\eqref{eq-dim-reg} that for any $X\in \mathcal M^n$ we have $\dim X\le \dim_{Haus} X$. However, as was mentioned earlier, the following natural question   remains open.
  \begin{quest}
  Let $X\in \mathcal M^n$. Is it true that $\dim X= \dim_{Haus} X$?
  \end{quest}

\subsection{Idea of the proof of Theorem~\ref{thm-main}}
Let  $\gamma\co[0,1]\to M^n$ be a unit speed shortest geodesic  in an $n$-manifold with $\Ric\ge (n-1)H$.
In ~\cite{CoNa} Colding and Naber constructed a parabolic approximation $h_\tau$ to $d(\cdot, p)$ given as the solution of the heat equation with initial conditions given by  $d(\cdot, p)$, appropriately cut off near the end points of $\gamma$ and outside a large ball containing $\gamma$. They showed that $h_{r^2}$ provides a good approximation to  $d^-=d(\cdot, p)$ on an $r$-neighborhood of $\gamma|_{[\delta,1-\delta]}$. In particular, they showed that
\begin{align}
  \int^{(1-\delta)}_{\delta }\left(\fint_{B_{r}(\gamma(t))} |\Hess_{h_{r^2}}|^2\right)\,dt \leq c(n,\delta,H)
  \label{CN.hess.int}
\end{align}
for all $r\le r_0(n,\delta,H)$. 
They used this to show that for any $t\in (\delta,1-\delta)$ most points in $B_r(\gamma(t))$ remain $r$-close to $\gamma$ under the reverse gradient flow of $d^-$ for a definite time $s\le\eps=\eps(n,\delta,H)$.
In section \ref{sec:grad-flow} we show that the same holds true for the reverse gradient flow $\phi_s$ of $h_{r^2}$.
Next,   the standard weak type 1-1 inequality for maximum function applied to the inequality (\ref{CN.hess.int}) implies that
\begin{align}
  \int^{(1-\delta)}_{\delta }\left(\fint_{B_{r}(\gamma(t))}(\Mx|\Hess_{h_{r^2}}|)^2\right)\,dt\leq c(n,\delta,H)
\end{align}
as well. This implies that for every $x$ in a subset $\mathcal C^{r}(\gamma(t))$ in $B_r(\gamma(t))$ of almost full measure the integral $\int_0^\eps \Mx |\Hess_{h_{r^2}}|(\phi_s(x))ds$ is small (see estimate (\ref{e.max.small})).
 Using a small modification of a lemma from ~\cite{Kap-Wil} this implies that for any such point $x$ and {\it any} $0<r_1\le r$ most points in $B_{r_1}(x)$ remain $r_1$-close to $\phi_s(x)$ for all  $s\le\eps$ under the flow $\phi_s$.
 This then easily implies that $\phi_s$ is Bilipschitz on $\mathcal C^{r}(\gamma(t))$ using Bishop-Gromov volume comparison and triangle inequality.

\subsection{Acknowledgements}

We are  very grateful to Aaron Naber for helpful conversations and to Shouhei Honda for bringing to our attention results of ~\cite{Honda1} and ~\cite{Honda}.
We are also very greateful to the referee for pointing out that our results imply Corollary~\ref{mm-cont-1}.
\section{Preliminaries}
In this section we will list most of the technical tools needed for the proof of Theorem~\ref{thm-main}.
 Throughout the rest of the paper, unless indicated otherwise,  we will assume that all manifolds $M^n$ involved are $n$-dimensional complete Riemannian satisfying
$$
\Ric_{M^n}\ge -(n-1)\,.
$$
\subsection{Segment inequality}

We will need the following result of Cheeger and Colding:

\smallskip

\begin{thm}[Segment inequality] ~\cite[Theorem 2.11]{CC}\label{seg-ineq} Given $n$ and $r_0>0$
there exists $c=c(n,r_0)$ such that the following holds.

Let $F\co M^n\to \R^+$ be a nonnegative measurable function.
Then for any $r\le r_0$ and $A,B\subset B_r(p)$ it holds
\[
\int_{A\times B}\int_{0}^{d(x,y)}F(\gamma_{x,y}(u))\,du\,d\vol_{x}\,d\vol_y\le c\cdot r\cdot (\vol A+\vol B)\int_{B_{2r}(p)}F(z)\,d\vol_z\,,
\]
where $\gamma_{z_1,z_2}$ denotes a minimal geodesic from $z_1$ to $z_2$.
\end{thm}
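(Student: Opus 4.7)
The strategy is to use geodesic polar coordinates centered at a point $x\in A$ and exploit the Bishop--Gromov Jacobian comparison to absorb the inner line integral into a volume integral, at the cost of a factor of $r$. Fix $x\in A$ and parametrize $y=\exp_x(sv)$ with $v\in S_xM$ and $s=d(x,y)\le 2r$, so that $d\vol_y=J_x(s,v)\,ds\,dv$, where $J_x(s,v)$ denotes the Jacobian of $\exp_x$, extended by zero past the cut locus of $x$. Since the minimizing geodesic $\gamma_{x,y}(u)=\exp_x(uv)$ for $u\in[0,s]$, the inner integral becomes $\int_0^s F(\exp_x(uv))\,du$, which I split at $u=s/2$ into pieces $I_+(x)$ (over $u\in[s/2,s]$) and $I_-(x)$ (over $u\in[0,s/2]$).

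For $I_+(x)$, Bishop--Gromov asserts that $s\mapsto J_x(s,v)/\sinh^{n-1}(s)$ is non-increasing on the set where $J_x>0$, so for $s\le 2r_0$ and $s/2\le u\le s$,
\[
J_x(s,v)\le\left(\frac{\sinh(s)}{\sinh(u)}\right)^{n-1}J_x(u,v)\le C(n,r_0)\,J_x(u,v).
\]
Substituting this bound and swapping the $u$- and $s$-integrals by Fubini rewrites $I_+(x)$ as
\[
\int_{S_xM}\int_0^{2r} F(\exp_x(uv))\,J_x(u,v)\left(\int_u^{\min(2u,\,2r)}\mathbf{1}_B(\exp_x(sv))\,ds\right)du\,dv,
\]
in which the innermost $s$-length is at most $r$. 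Moreover, on the support of the indicator one has $y=\exp_x(sv)\in B\subset B_r(p)$ and $d(z,y)=s-u\le s/2\le r$ for $z=\exp_x(uv)$, whence $d(z,p)\le 2r$. Hence $I_+(x)\le C(n,r_0)\,r\int_{B_{2r}(p)}F\,d\vol$, and integrating over $x\in A$ produces the $\vol A$ term.

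For $I_-(x)$ I would apply the same argument with the roles of $x$ and $y$ exchanged: the point $z=\gamma_{x,y}(u)$ with $u\le s/2$ sits in the second half of the reversed geodesic emanating from $y\in B$, so centering polar coordinates at $y$ and repeating the previous step produces the analogous bound involving $\vol B$. Summing the two halves gives the claim with $c(n,r_0)=2C(n,r_0)$. The main technical nuisance is the cut locus of $x$: past it $J_x=0$ by convention, so such $(s,v)$ contribute nothing, and along the image of a minimizing $\gamma_{x,y}$ the Bishop--Gromov monotonicity is valid all the way out to $s$, which is exactly what is needed for the Jacobian comparison to apply on the region that actually carries the integral.
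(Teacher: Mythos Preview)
The paper does not give its own proof of this statement; it merely cites it as \cite[Theorem 2.11]{CC} and uses it as a black box. Your sketch is correct and is essentially the original Cheeger--Colding argument: pass to geodesic polar coordinates at one endpoint, split the line integral at the midpoint, use Bishop--Gromov monotonicity of the Jacobian to replace $J_x(s,v)$ by $C(n,r_0)J_x(u,v)$ on the far half, swap the $s$- and $u$-integrals to gain the factor $r$, and handle the near half by symmetry in $x$ and $y$. Your treatment of the cut locus and of the containment $z\in B_{2r}(p)$ via the triangle inequality through $y\in B$ is the right way to close the argument.
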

\subsection{Generalized Abresch-Gromoll Inequality}
 Let $\gamma\co [0,L]\to M$ be a minimizing  unit speed geodesic with $\gamma(0)=p,\gamma(L)=q$ where $L=d(p,q)$.
To simplify notations and exposition from now on we will assume  that $L=1$. Let $d^-=d(\cdot, p), d^+=d(\cdot, q)$, and let $e=d^++d^--d(p,q)$ be the excess function.

The following result is a direct consequence of \cite[Theorem 2.8]{CoNa} and, as was observed in~\cite{CoNa}, using the fact that $|\nabla e|\le 2$ it immediately implies the Abresch-Gromoll estimate~\cite{AbGr}.
\begin{thm}[Generalized Abresh-Gromoll Inequality] \cite[Theorem 2.8]{CoNa} \label{thm-gen-ab-gr}
There exist $c(n,\delta)$, $r_0(n,\delta)>0$ such that for any $0<\delta<t<1-\delta<1$, $0<r<r_0$ it holds
\[
\fint_{B_{r}(\gamma(t))} e\leq c(n,\delta)\,r^2\, .
\]

\end{thm}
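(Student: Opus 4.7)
The plan is to combine the Laplace comparison theorem, the Bochner identity, and the segment inequality (Theorem~\ref{seg-ineq}), exploiting the key structural fact that the excess vanishes identically along the whole geodesic: $e(\gamma(s))=0$ for every $s\in[0,1]$, and moreover $\nabla e|_{\gamma}=0$ because $\nabla d^-=\dot\gamma=-\nabla d^+$ along $\gamma$. Thus $e$ vanishes to second order in a weak sense on $\gamma$, and a Taylor-type bound controlled by integral Hessian averages should yield the claimed $O(r^2)$ average bound.

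First I would apply distributional Laplace comparison. For $t\in(\delta,1-\delta)$ and $r\le r_0(n,\delta,H)$ small enough, both $d^\pm$ stay uniformly in $[\delta/2,1-\delta/2]$ on $B_{2r}(\gamma(t))$, so $\Delta d^\pm\le c(n,H,\delta)$ weakly and hence $\Delta e\le c(n,H,\delta)$. Next, I would apply the Bochner identity to the unit-speed functions $d^\pm$ against a cutoff $\phi$ equal to $1$ on $B_r(\gamma(t))$, supported in $B_{2r}(\gamma(t))$, with $|\nabla\phi|\le C/r$. Since $|\nabla d^\pm|^2\equiv 1$ the term $\tfrac12\Delta|\nabla d^\pm|^2$ drops out, and integration by parts combined with the bounds on $\Delta d^\pm$ and $\Ric\ge-(n-1)H$ yields an integral Hessian estimate of the form
$$\fint_{B_r(\gamma(t))}|\Hess_{d^\pm}|^2\le\frac{c(n,H,\delta)}{r},$$
and the same for $\Hess_e=\Hess_{d^+}+\Hess_{d^-}$.

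Third, for $y\in B_r(\gamma(t))$ and $\sigma$ a minimizing geodesic from $\gamma(t)$ to $y$, a Taylor expansion using $e(\gamma(t))=\nabla e(\gamma(t))=0$ gives the pointwise bound
$$e(y)\le \int_0^{d(\gamma(t),y)}\bigl(d(\gamma(t),y)-u\bigr)\,|\Hess_e|(\sigma(u))\,du.$$
Averaging over $y\in B_r(\gamma(t))$ and applying Cauchy-Schwarz together with the segment inequality applied to $F=|\Hess_e|^2$ (taking $A=B_\rho(\gamma(t))$ with $\rho\sim r$ in Theorem~\ref{seg-ineq}) converts the $L^2$ Hessian bound into an integrated estimate for $e$. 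To upgrade to the sharp $r^2$ scaling, one further exploits the vanishing of $e$ along the entire $1$-parameter family $\{\gamma(s)\}_s$: one projects $y$ to its nearest point on $\gamma$ and applies a Poincar\'e-type estimate on the transversal $(n-1)$-dimensional cross-sections to $\gamma$.

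The main obstacle is matching the scaling. The Bochner-based Hessian bound has order $r^{-1}$ in $L^2$ average because the cut loci of $d^\pm$ can come within distance comparable to $r$ of $\gamma(t)$, and through the segment inequality alone this produces only an $r^{3/2}$ average bound on $e$. Extracting the final factor of $r^{1/2}$ requires the parabolic regularization of Colding--Naber: one replaces $d^\pm$ by its heat-flow approximation $h_{r^2}$ at time $\tau=r^2$, whose Hessian is genuinely bounded in the $L^2$ integral sense at this scale, derives the average estimate on the smoothed excess $\hat e$, and returns to $e$ via the estimate $\|e-\hat e\|_{L^\infty(B_r)}=O(r^2)$, which combines $|\nabla e|\le 2$ with the Li--Yau bound on the heat kernel. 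The pointwise Abresch--Gromoll bound $e\le c\,r^{n/(n-1)}$, while weaker than $r^2$ on average, also serves as a useful input for controlling boundary terms in the heat-flow regularization.
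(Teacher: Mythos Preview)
The paper does not prove this statement; it is quoted verbatim from \cite[Theorem~2.8]{CoNa} as a preliminary tool. So there is no ``paper's own proof'' to compare against here.

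As for your outline itself, there are two genuine gaps. First, the Bochner--Taylor scheme you describe treats $d^\pm$ (and hence $e$) as if they were $C^2$. They are not: the cut loci of $p$ and $q$ can intrude into $B_{2r}(\gamma(t))$, and on the cut locus $\Hess_{d^\pm}$ carries singular (measure-valued) contributions, so neither the pointwise Taylor remainder formula
\[
e(y)\le \int_0^{d(\gamma(t),y)}\bigl(d(\gamma(t),y)-u\bigr)\,|\Hess_e|(\sigma(u))\,du
\]
nor the $L^2$ Hessian bound $\fint_{B_r}|\Hess_{d^\pm}|^2\le c/r$ is available as written. You would need to work with the distributional inequality $\Delta e\le c$ directly, not with $|\Hess_e|$.

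Second, and more seriously, you yourself recognize that the direct argument yields only $O(r^{3/2})$, and you propose to close the gap by invoking the parabolic approximation $h_{r^2}$ and its Hessian bounds. But in \cite{CoNa} the estimates on $h_{r^2}$ (their Theorem~2.19, quoted here as Theorem~\ref{t:mainregthm1}) are established \emph{after} and \emph{using} the generalized Abresch--Gromoll inequality; indeed Theorem~\ref{t:mainregthm1}(i) is stated only for points with $e(x)\le r^2$, a hypothesis one verifies on most of $B_r(\gamma(t))$ precisely via Theorem~\ref{thm-gen-ab-gr}. So your proposed fix is circular. The actual argument in \cite{CoNa} proceeds differently: it exploits the one-sided Laplacian bound $\Delta e\le c(n,\delta)$ together with heat-kernel (mean-value) estimates and the fact that $e\ge 0$ with $e(\gamma(t))=0$, bypassing any pointwise Hessian control on $e$.
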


\subsection{Parabolic approximation for distance functions}

Fix $\delta>0$ and let $h_t^\pm$ be  parabolic approximations to $d^\pm$ constructed in \cite{CoNa}.
They are given by the solutions to the heat equations
\[
\frac{d}{dt} h^\pm_t=\Delta h^\pm_t,\quad h^\pm_0(x)=\lambda(x) \cdot d^\pm(x)
\]
for appropriately constructed cutoff function $\lambda$.
 We will need the following properties of $h_t$ established in  \cite{CoNa}.
\begin{lem} \cite[Lemma 2.10]{CoNa}                    \label{lem:lap-comp}
There exists $c(n,\delta)$ such that
\begin{equation}
\Delta h^\pm_t \leq {c(n,\delta)}\, .
\end{equation}
\end{lem}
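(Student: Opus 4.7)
The plan is to propagate the upper bound on $\Delta(\lambda d^\pm)$ forward in time by the parabolic maximum principle. I will argue only for $h_t^-$, the argument for $h_t^+$ being symmetric.

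First, I would recall the Laplace comparison theorem: under the standing assumption $\Ric_M\ge-(n-1)$, the function $d^-=d(\cdot,p)$ satisfies, in the sense of distributions on $M\setminus\{p\}$, the inequality
\[
\Delta d^- \le (n-1)\coth(d^-),
\]
with the singular part of the distributional Laplacian concentrated on the cut locus and being a nonpositive Radon measure. Composing this with the cutoff $\lambda$ from \cite{CoNa}, which is supported in a controlled neighborhood of $\gamma$ and satisfies $|\lambda|,|\nabla\lambda|,|\Delta\lambda|\le C(n,\delta)$ (and which vanishes near $p$ and $q$ so that $d^-$ stays bounded away from $0$ on $\mathrm{supp}\,\lambda$), the product rule gives the distributional inequality
\[
\Delta(\lambda\cdot d^-) = (\Delta\lambda)\,d^- + 2\langle\nabla\lambda,\nabla d^-\rangle + \lambda\,\Delta d^- \le C(n,\delta)
\]
on all of $M$, where the cut-locus contribution only improves the inequality.

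Next I would pass this bound through the heat semigroup. Since $h_0^- = \lambda d^-$ is bounded and compactly supported, $h_t^-$ is smooth for $t>0$ and satisfies $\partial_t h_t^- = \Delta h_t^-$. Using the heat kernel representation
\[
h_t^-(x)=\int_M H(x,y,t)\,(\lambda d^-)(y)\,d\vol_y,
\]
and formally commuting $\Delta_x$ under the integral then integrating by parts in $y$ (justified by a standard smoothing of $d^-$ away from the cut locus, using that $d^-$ is Lipschitz and that the singular part of $\Delta d^-$ has the correct sign so that the approximation only decreases the upper bound),
\[
\Delta h_t^-(x)=\int_M H(x,y,t)\,\Delta(\lambda d^-)(y)\,d\vol_y \le C(n,\delta)\int_M H(x,y,t)\,d\vol_y \le C(n,\delta),
\]
where in the last step I use the stochastic completeness of $M$ (guaranteed by $\Ric\ge-(n-1)$ and completeness).

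Alternatively, and perhaps more cleanly, I would invoke a parabolic maximum principle directly: the function $u=\Delta h_t^-$ solves the heat equation $\partial_t u = \Delta u$ with initial data $u|_{t=0}=\Delta(\lambda d^-)\le C(n,\delta)$, so comparison with the constant solution $C(n,\delta)$ gives $\Delta h_t^- \le C(n,\delta)$ for all $t>0$.

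The main obstacle is the non-smoothness of $d^-$ on the cut locus, which makes both the distributional inequality for $\Delta(\lambda d^-)$ and the maximum principle argument require care. I would handle this by approximating $d^-$ from above by smooth superharmonic barriers (Calabi's trick), checking that each barrier satisfies the pointwise estimate $\Delta(\lambda d_\varepsilon) \le C(n,\delta)+o(1)$, propagating by the heat flow, and then passing to the limit; the cut-locus singular part of $\Delta d^-$ being a nonpositive measure is exactly what makes this limit cooperative.
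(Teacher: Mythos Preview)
The paper does not actually prove this lemma; it is quoted verbatim from \cite[Lemma 2.10]{CoNa} and used as a black box. Your argument is essentially the standard one (and the one Colding--Naber give): bound $\Delta(\lambda d^-)$ from above distributionally via Laplace comparison plus the cutoff estimates $|\nabla\lambda|,|\Delta\lambda|\le C(n,\delta)$, then propagate the upper bound under the heat flow by the maximum principle, handling the cut locus by Calabi barriers or the sign of the singular part. So there is nothing to compare against in this paper, and your sketch is correct and aligned with the original source.
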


\begin{thm}   \cite[Theorem 2.19]{CoNa}                       \label{t:mainregthm1}
There exist $c(n,\delta), r_0(n,\delta)>0$ such that for all $r_1\leq r_0$  there exists $r\in [\frac{r_1}{2},2r_1]$ such that the following properties are satisfied
\begin{enumerate}
\renewcommand{\labelenumi}{(\roman{enumi})}
\item $|h^\pm_{r^2}-d^\pm|(x)\leq c\,r^2$ for any $x\in B_2(p)\backslash (B_\delta(p)\cup B_\delta(q))$ with $e(x)\leq r^2$
\label{e:emain1}
\item $\fint_{B_r(x)}||\nabla h^\pm_{r^2}|^2-1| \leq c\,r$.\label{e:emain2}
\item $\int^{(1-\delta)}_{\delta }\fint_{B_{r}(\gamma(t))}||\nabla h^\pm_{r^2}|^2-1| \leq c\,r^2$.
\label{e:emain3}
\item $\int^{(1-\delta)}_{\delta }\fint_{B_{r}(\gamma(t))}|\Hess_{h^\pm_{r^2}}|^2\leq c$.
\label{e:emain4}
\end{enumerate}
\end{thm}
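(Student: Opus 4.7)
The plan is to bound space-time integrals of $|\Hess_{h^\pm_t}|^2$ and $||\nabla h^\pm_t|^2 - 1|$ via Bochner's formula, to convert these bounds into tube averages along $\gamma$ using the segment inequality (Theorem~\ref{seg-ineq}), and then to extract a good radius $r \in [r_1/2, 2r_1]$ by a pigeonhole argument on the parabolic scale.

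For the integrated Hessian and gradient-defect estimates underlying (iii)--(iv), I would apply Bochner to $|\nabla h^\pm_t|^2$. Since $h^\pm_t$ solves the heat equation, one has
\[
(\partial_t - \Delta)|\nabla h^\pm_t|^2 = -2|\Hess_{h^\pm_t}|^2 - 2\Ric(\nabla h^\pm_t, \nabla h^\pm_t).
\]
Multiplying by a space-time cutoff $\psi$ supported on a fixed neighborhood of $\gamma \times [0, r_0^2]$ and integrating by parts, the energy identity yields
\[
\int_0^{r_0^2}\!\!\int_M |\Hess_{h^\pm_t}|^2 \psi\, dv\, dt \le C(n,\delta,H),
\]
after absorbing the Ricci term via $\Ric \ge -(n-1)$ and controlling boundary contributions through the $L^\infty$ Laplacian bound of Lemma~\ref{lem:lap-comp} together with a gradient bound $|\nabla h^\pm_t| \le 1 + O(r^2)$ inherited from the $1$-Lipschitz initial data by the maximum principle applied to $|\nabla h^\pm_t|^2$. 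The same identity, once integrated in $t$, also provides a bound on $\int\!\!\int ||\nabla h^\pm_t|^2 - 1|\, dv\, dt$ on the tube.

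For the pointwise bound (i), set $u_t = h^\pm_t - d^\pm$. Its initial data $(\lambda-1)d^\pm$ is supported away from $\gamma|_{[\delta,1-\delta]}$, and $(\partial_t - \Delta)u_t \ge -(n-1)\coth d^\pm$ in the barrier sense away from the cut locus. Comparison with super- and subsolutions built from the excess function $e$, together with Theorem~\ref{thm-gen-ab-gr} and off-diagonal heat-kernel decay (which prevents the cutoff from affecting the interior tube on the $r^2$ timescale), should yield $|u_{r^2}| \le c r^2$ pointwise on $\{e \le r^2\} \cap B_2(p) \setminus (B_\delta(p) \cup B_\delta(q))$.

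With all four estimates available in integrated form on the dyadic scale range $[r_1^2/4, 4 r_1^2]$, the final step is a pigeonhole on $r$: apply the segment inequality to the integrands of (iii) and (iv) to turn ambient ball integrals into the tube averages $\int_\delta^{1-\delta}\fint_{B_r(\gamma(t))}$ along $\gamma$, then average all four quantities against $dr/r$ on $[r_1/2, 2r_1]$ to find a single $r$ in that range at which (i)--(iv) hold simultaneously with uniform constants. The principal obstacle I expect is (i): converting integrated control of $|h^\pm_t - d^\pm|$ into a pointwise bound under only a lower Ricci assumption requires a delicate barrier argument coupled with sharp off-diagonal heat-kernel tail estimates, whereas (ii)--(iv) are comparatively routine consequences of Bochner, segment inequality and covering once (i) is in hand.
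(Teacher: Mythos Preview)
This theorem is not proved in the present paper; it is quoted verbatim from Colding--Naber \cite[Theorem~2.19]{CoNa} and used as a black box. There is therefore no proof here to compare your proposal against.

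That said, your sketch is broadly faithful to the strategy in \cite{CoNa}: the Bochner identity $(\partial_t - \Delta)|\nabla h^\pm_t|^2 = -2|\Hess_{h^\pm_t}|^2 - 2\Ric(\nabla h^\pm_t,\nabla h^\pm_t)$ is indeed the engine behind the space--time Hessian and gradient-defect bounds, the segment inequality converts ambient integrals into tube averages along $\gamma$, and a pigeonhole in $r$ over a dyadic interval extracts a single good scale. Your identification of (i) as the most delicate item is also correct: in \cite{CoNa} the pointwise bound $|h^\pm_{r^2}-d^\pm|\le c r^2$ is obtained not by a direct barrier argument for $u_t=h^\pm_t-d^\pm$ against $e$, but by combining the Laplacian comparison for $d^\pm$ (which gives a one-sided heat-inequality for $d^\pm$ in the barrier sense), heat-kernel estimates controlling the effect of the cutoff $\lambda$, and the generalized Abresch--Gromoll inequality (Theorem~\ref{thm-gen-ab-gr}). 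Your proposed route via super/subsolutions built from $e$ is plausible but would require care, since $e$ does not satisfy a clean differential inequality away from $\gamma$; the actual argument in \cite{CoNa} proceeds through separate upper and lower bounds on $h^\pm_t-d^\pm$ using $\Delta d^\pm \le c(n,\delta)$ and the excess estimate rather than a single two-sided comparison.
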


\subsection{First Variation formula}
We will need the following  lemma  (cf. \cite[Lemma 3.4]{CoNa} ).
\begin{lem}\label{lem:dist-growth}

Let $X$ be a smooth vector field on $M$  and  let $\sigma_1(t),\sigma_2(t)$ be smooth curves. Let $p=\sigma_1(0), q=\sigma_2(0)$. Then
\[
\left|\frac{d^+}{dt}d(\sigma_1(t),\sigma_2(t))|_{t=0}\right| \leq |X(p)-\sigma_1'(0)|+|X(q)-\sigma_2'(0)|+ \int_{\gamma_{p,q}}|\nabla_\cdot  X|\,,
\]
where $\gamma_{p,q}\co [0,d(p,q)]\to M$ is a shortest geodesic from $p$ to $q$.
Here $|\nabla_\cdot  X|$ means the norm of the  full covariant derivative of $X$ i.e. norm of the map $v\mapsto \nabla_vX$.
In particular, if  $h\co M\to\R$ is smooth and $X=\nabla h$, then
\[
\left|\frac{d^+}{dt}d(\sigma_1(t),\sigma_2(t))|_{t=0}\right| \leq |X(p)-\sigma_1'(0)|+|X(q)-\sigma_2'(0)|+ \int_{\gamma_{p,q}}|\Hess_h|\,.
\]

\end{lem}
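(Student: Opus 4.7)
The plan is to bound $d(\sigma_1(t),\sigma_2(t))$ by comparison with the flow of $X$ along the geodesic $\gamma_{p,q}$. Let $\phi_s$ denote the local flow of $X$, which is well defined in a neighborhood of $\gamma_{p,q}$ for $|s|$ small. The inequality then arises by estimating three pieces coming from the triangle inequality
\[
d(\sigma_1(t),\sigma_2(t)) \;\le\; d\bigl(\sigma_1(t),\phi_t(p)\bigr)+ d\bigl(\phi_t(p),\phi_t(q)\bigr)+ d\bigl(\sigma_2(t),\phi_t(q)\bigr).
\]

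First, I would handle the two endpoint terms. Since $\sigma_1$ is smooth with $\sigma_1(0)=p$, $\sigma_1'(0)$ given, and $\phi_t(p)=p+tX(p)+O(t^2)$ in any local chart, the standard Taylor estimate gives $d(\sigma_1(t),\phi_t(p))=t\,|X(p)-\sigma_1'(0)|+o(t)$, and similarly for $q$. Next, for the middle term, I would bound it by the length of the curve $s\mapsto \phi_t(\gamma_{p,q}(s))$. Using $d\phi_t(v)=v+t\,\nabla_v X+O(t^2)$, one computes $|d\phi_t(\dot\gamma_{p,q})|\le 1+t\,|\nabla_{\dot\gamma_{p,q}}X|+O(t^2)\le 1+t\,|\nabla_\cdot X|+O(t^2)$, since $\dot\gamma_{p,q}$ is a unit vector and $|\nabla_\cdot X|$ denotes the operator norm. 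Integrating in $s$ over $[0,d(p,q)]$ yields
\[
d(\phi_t(p),\phi_t(q))\;\le\; d(p,q)+t\int_{\gamma_{p,q}}|\nabla_\cdot X|\,+\,o(t).
\]

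Combining these three estimates, subtracting $d(p,q)=d(\sigma_1(0),\sigma_2(0))$, dividing by $t$ and taking $\limsup_{t\to 0^+}$ gives the upper bound on the upper right Dini derivative. For the absolute value, the symmetric lower bound
\[
d(\sigma_1(t),\sigma_2(t))\;\ge\; d(\phi_t(p),\phi_t(q))- d\bigl(\sigma_1(t),\phi_t(p)\bigr)- d\bigl(\sigma_2(t),\phi_t(q)\bigr)
\]
is handled the same way: apply the length estimate for $\phi_{-t}$ to a minimizing geodesic $\tilde\gamma_t$ from $\phi_t(p)$ to $\phi_t(q)$, obtaining $d(p,q)\le d(\phi_t(p),\phi_t(q))+t\int_{\tilde\gamma_t}|\nabla_\cdot X|+o(t)$, and use that as $t\to 0^+$ the geodesics $\tilde\gamma_t$ subconverge to a minimizing geodesic between $p$ and $q$, so the integrals $\int_{\tilde\gamma_t}|\nabla_\cdot X|$ converge (up to subsequence) to $\int_{\gamma_{p,q}}|\nabla_\cdot X|$ for some such minimizer. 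Finally, the special case $X=\nabla h$ is immediate because $\nabla_v(\nabla h)=\Hess_h(v,\cdot)$, whose operator norm is $|\Hess_h|$.

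The main obstacle is the low regularity aspect of step two, namely ensuring that the length comparison for $\phi_t\circ\gamma_{p,q}$ gives the correct estimate in terms of the \emph{operator norm} $|\nabla_\cdot X|$ rather than merely $|\nabla_{\dot\gamma}X|$; the key observation is that these agree as upper bounds along any unit vector, so using the operator norm is harmless and makes the estimate geometry-independent. The reverse inequality requires only that minimizing geodesics from $\phi_t(p)$ to $\phi_t(q)$ accumulate on minimizing geodesics from $p$ to $q$ as $t\to 0^+$, which holds by Arzela--Ascoli and lower semicontinuity of length.
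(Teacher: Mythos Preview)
Your proof is correct and follows essentially the same approach as the paper, which simply remarks that the lemma is an easy consequence of the first variation formula for distance functions together with the triangle inequality. Your argument via the flow $\phi_t$ of $X$ is a concrete realization of exactly that first-variation-plus-triangle-inequality computation, just written out in more detail than the paper provides.
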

\begin{proof}
The lemma easily follows from the first variation formula for distance functions and the triangle inequality.
\end{proof}

\subsection{Maximum function}
Let  $f\co M\to\R$  be a nonnegative function. Consider the maximum function $\Mx_{\rho} f(p):=\sup_{r\le
\rho}\sfint_{B_r(p)}f$ for $\rho\in (0,4]$. We'll set $\Mx f:=\Mx_1 f$.

The following lemma is well-known \cite[p. 12]{Stein}.
 \begin{lem}[Weak type 1-1 inequality]\label{lem:mf-est}\label{lem: weak11}
Suppose $(M^n,g)$ has $\Ric\ge -(n-1)$ and let $f\co M\to\R$  be a nonnegative function. Then the following holds.
\begin{enumerate}
\renewcommand{\labelenumi}{(\roman{enumi})}
\item If $f\in L^\a(M)$ with $\a\ge 1$ then $\Mx_\rho f$ is finite almost everywhere.
\item If $f\in L^1(M)$ then $\vol\bigl\{x\in M\co \Mx_\rho f(x)>c\bigr\}\le \frac{C(n)}{c} \int_M f$ for any $c>0$.
\item If $f\in L^\a(M)$ with $\a>1$ then $\Mx_\rho f\in L^\a(M)$ and $||\Mx_\rho f||_\a\le C(n,\a)||f||_\a$.
\end{enumerate}
\end{lem}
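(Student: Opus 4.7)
The proof is a standard Hardy--Littlewood maximal inequality argument, adapted to the manifold setting via Bishop--Gromov volume comparison. The key geometric input is that since $\Ric\ge -(n-1)$ and $\rho\le 4$, Bishop--Gromov yields a doubling constant $D(n)>0$ such that $\vol B_{5r}(x)\le D(n)\vol B_r(x)$ for every $x\in M$ and every $r\le \rho$. This is the only place Ricci curvature enters.

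For part \textup{(ii)}, the plan is a Vitali covering argument. Given $c>0$, let $E_c=\{x:\Mx_\rho f(x)>c\}$. For each $x\in E_c$ choose $r(x)\le \rho$ with
\[
\int_{B_{r(x)}(x)}f \;>\; c\,\vol B_{r(x)}(x).
\]
By the Vitali $5r$-covering lemma applied to the family $\{B_{r(x)}(x)\}_{x\in E_c}$ (whose radii are uniformly bounded by $\rho\le 4$), one extracts a pairwise disjoint subfamily $\{B_{r_i}(x_i)\}$ such that $E_c\subset \bigcup_i B_{5r_i}(x_i)$. Using the doubling estimate and disjointness,
\[
\vol(E_c)\le \sum_i \vol B_{5r_i}(x_i)\le D(n)\sum_i\vol B_{r_i}(x_i)\le \frac{D(n)}{c}\sum_i \int_{B_{r_i}(x_i)}f\le \frac{D(n)}{c}\int_M f,
\]
which is \textup{(ii)} with $C(n)=D(n)$.

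For part \textup{(i)}, observe that $\Mx_\rho f(x)$ depends only on the values of $f$ in $B_\rho(x)$. Given any bounded open set $U\subset M$, the function $\tilde f:=f\,\chi_{B_\rho(U)}$ lies in $L^1(M)$, because balls of bounded radius have finite volume (by Bishop--Gromov) and $f\in L^\alpha$ with $\alpha\ge 1$ embeds into $L^1$ on such balls via H\"older. Applying \textup{(ii)} to $\tilde f$ and letting $c\to\infty$ shows $\Mx_\rho\tilde f<\infty$ a.e.\ on $U$, hence $\Mx_\rho f<\infty$ a.e.\ on $U$, and exhausting $M$ by such $U$ gives \textup{(i)}.

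For part \textup{(iii)}, one combines \textup{(ii)} with the trivial bound $\|\Mx_\rho f\|_\infty\le \|f\|_\infty$ and invokes the Marcinkiewicz interpolation theorem, which only requires the underlying measure space structure and not the geometry. This yields $\|\Mx_\rho f\|_\alpha\le C(n,\alpha)\|f\|_\alpha$ for every $\alpha>1$, where the constant blows up as $\alpha\to 1^+$ in the usual way. The only non-routine step is the Vitali covering, which is standard in proper metric measure spaces once doubling is available; I expect no serious obstacle since the radii are uniformly bounded by $\rho\le 4$.
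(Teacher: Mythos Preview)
The paper does not actually prove this lemma; it simply states it as well-known and cites \cite[p.~12]{Stein}. Your argument is the standard Vitali-covering plus Marcinkiewicz-interpolation proof adapted to doubling metric measure spaces, with Bishop--Gromov supplying the doubling constant, and it is correct. So there is nothing to compare against beyond noting that you have filled in what the authors left as a citation.
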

This lemma easily generalizes to functions defined on subsets as follows:
\begin{cor}\label{cor:Mx}
Let $\Ric_{M^n}\ge -(n-1)$ and $f\co M\to \R_+$ be measurable. Let $A\subset M$ be measurable such that $f\in L^\alpha(U_\rho(A))$ where $\alpha>1$. Here $U_\rho(A)$ denotes the $\rho$-neighborhood of $A$ .
Then
\[
||\Mxrh f||_{L^\alpha(A)}\le C(n,\alpha)||f||_{L^\alpha(U_\rho(A)) }\,.
\]
\end{cor}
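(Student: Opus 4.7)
The corollary is a routine localization of Lemma~\ref{lem:mf-est}(iii): the maximum function $\Mx_\rho f(x)$ only probes averages over balls of radius $\le \rho$, so at a point $x \in A$ its value depends only on the restriction of $f$ to $U_\rho(A)$. My plan is to reduce to the global statement by extension by zero.

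First, define $\tilde f \co M \to \R_+$ by $\tilde f = f$ on $U_\rho(A)$ and $\tilde f = 0$ elsewhere. Then $\tilde f$ is measurable on all of $M$, and by construction
\[
\|\tilde f\|_{L^\alpha(M)} = \|f\|_{L^\alpha(U_\rho(A))} < \infty,
\]
so $\tilde f \in L^\alpha(M)$. The key observation is that for every $x \in A$ and every $r \le \rho$ we have $B_r(x) \subset U_\rho(A)$; consequently $\tilde f$ and $f$ agree on $B_r(x)$, so
\[
\fint_{B_r(x)} \tilde f \, d\vol = \fint_{B_r(x)} f \, d\vol.
\]
Taking the supremum over $r \le \rho$ gives $\Mx_\rho \tilde f(x) = \Mx_\rho f(x)$ for every $x \in A$.

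Now apply Lemma~\ref{lem:mf-est}(iii) to the globally defined function $\tilde f$ on $M$, which yields $\|\Mx_\rho \tilde f\|_{L^\alpha(M)} \le C(n,\alpha) \|\tilde f\|_{L^\alpha(M)}$. Restricting to $A$ and using the pointwise identity above,
\[
\|\Mx_\rho f\|_{L^\alpha(A)} = \|\Mx_\rho \tilde f\|_{L^\alpha(A)} \le \|\Mx_\rho \tilde f\|_{L^\alpha(M)} \le C(n,\alpha) \|\tilde f\|_{L^\alpha(M)} = C(n,\alpha) \|f\|_{L^\alpha(U_\rho(A))},
\]
which is the claimed inequality.

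There is no real obstacle here; the only thing to notice is the locality of $\Mx_\rho$ at scale $\rho$, which is exactly what forces $B_r(x) \subset U_\rho(A)$ for $x \in A$ and $r \le \rho$ and lets the zero extension do all the work. The hypothesis $\alpha > 1$ is used only to invoke the strong-type bound in Lemma~\ref{lem:mf-est}(iii); no geometry beyond the ambient Ricci lower bound (needed for that lemma) enters the argument.
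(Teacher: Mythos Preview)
Your proof is correct and is essentially identical to the paper's own argument: the paper sets $\bar f = f\cdot \chi_{U_\rho(A)}$, notes $\Mx_\rho f(x)=\Mx_\rho \bar f(x)$ for $x\in A$, and applies Lemma~\ref{lem:mf-est}(iii). You have simply written out the same steps in more detail.
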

\begin{proof}
Let $\bar f=f\cdot \chi_{U_\rho(A)}$. Obviously, $\Mxrh f(x)=\Mxrh \bar f(x)$ for any $x\in A$. The result follows by applying Lemma ~\ref{lem:mf-est} (iii) to $\bar f$.
\end{proof}

\section{Gradient flow of the parabolic approximation}\label{sec:grad-flow}

Let $\phi_s$ be the {\it reverse}  gradient flow of $h=h_{r^2}^-$ (i.e. the gradient flow of $-h_{r^2}^-$) and let $\psi_s$ be the reverse gradient flow of $d^-$. We first want to show that for most points  $x\in B_r(\gamma(t))$ we have that $\phi_s(x)\in B_{2r}(\gamma(t-s))$ 
 for all $t\in(\delta,1-\delta)$ and $s\in [0,\eps]$ for some uniform $\eps=\eps(n,\delta)$.

Note that this (and more) is already known for $\psi_s$ by ~\cite{CoNa}.
Following Colding-Naber we use the following
\begin{defn}\label{defn:A-B}
 For $0<s<t<1$ define the set $\mathcal{A}^t_s(r)\equiv \{z\in B_r(\gamma(t)):\psi_u(z)\in B_{2r}(\gamma(t-u)),\,\text{  } \forall 0\leq u\leq s\}$.
 Similarly, we define $\mathcal{B}^t_s(r)\equiv \{z\in B_r(\gamma(t)):\phi_u(z)\in B_{2r}(\gamma(t-u)),\,\text{  }\, \forall 0\leq u\leq s\}$.
\end{defn}

An important technical tool used to prove the main results of ~\cite{CoNa} is the following
\begin{prop} \cite[Proposition 3.6]{CoNa}\label{prop-dist-grad}
There exist $r_0(n,\delta)$ and $\epsilon_0(n,\delta)$ such that if $t\in(\delta,1-\delta)$ and $\eps\leq \epsilon_0$ then $\forall r\leq r_0$ as in Theorem \ref{t:mainregthm1}
 we have
\[
\frac{1}{2}\leq \frac{\vol(\mathcal{A}^{t}_{\eps}(r))}{\vol(B_r(\gamma(t)))}\,.
\]

\end{prop}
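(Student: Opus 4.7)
The plan is to bound the bad set $\mathcal{B} := B_r(\gamma(t)) \setminus \mathcal{A}^t_\eps(r)$ by $\tfrac12 \vol B_r(\gamma(t))$. The key identity, valid where $d^\pm$ are smooth, is
\[
\tfrac{d}{du} e(\psi_u(z)) \;=\; -\bigl(1+\langle\nabla d^+, \nabla d^-\rangle\bigr)(\psi_u(z)) \;\le\; 0,
\]
so, writing $\alpha := 1+\langle\nabla d^+, \nabla d^-\rangle \in [0,2]$ (which vanishes precisely on shortest $p$-$q$ geodesics, and whose smallness forces $\nabla d^- \approx \gamma'$), the quantity $\int_0^\eps \alpha(\psi_u(z))\,du$ is bounded above by $e(z)$. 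Combining this with the generalized Abresch--Gromoll estimate $\fint_{B_r(\gamma(t))} e \le c(n,\delta)r^2$ (Theorem \ref{thm-gen-ab-gr}), Fubini, and Markov's inequality produces a good subset $\mathcal G \subset B_r(\gamma(t))$ with $\vol\mathcal G \ge \tfrac78 \vol B_r(\gamma(t))$ on which $\int_0^\eps \alpha(\psi_u(z))\,du \le C(n,\delta)r^2$.

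Next I would show $\mathcal G \subset \mathcal A^t_\eps(r)$, up to removing a further set of measure $\le \tfrac14 \vol B_r(\gamma(t))$. The idea is to estimate $D(u) := d(\psi_u(z), \gamma(t-u))$ via the first variation formula of Lemma \ref{lem:dist-growth} applied to $\sigma_1(u) = \psi_u(z)$ and $\sigma_2(u) = \gamma(t-u)$ with the smooth field $X := -\nabla h^-_{r^2}$ coming from the parabolic approximation of Theorem \ref{t:mainregthm1}. Both endpoint errors reduce to $|\nabla d^- - \nabla h^-_{r^2}|$ at the respective endpoints (using $\nabla d^- = \gamma'$ along $\gamma$), while the interior contribution is $\int_\eta |\Hess_{h^-_{r^2}}|$ along the minimizing geodesic $\eta$ from $\psi_u(z)$ to $\gamma(t-u)$. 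Integrating $D'$ in $u$, applying Cauchy--Schwarz, and combining the $L^2$ Hessian and gradient-norm estimates from Theorem \ref{t:mainregthm1}(ii)--(iv) with the smallness of $\int \alpha$ from Step 1 yields a Gr\"onwall-type bound $D(u) \le r + C(n,\delta)\sqrt{\eps}\,r$ for $u \in [0,\eps]$, which is $\le 2r$ once $\eps \le \eps_0(n,\delta)$.

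The main obstacle is converting the \emph{averaged} estimates of Theorem \ref{t:mainregthm1} and Abresch--Gromoll into \emph{pointwise} control along individual flow lines $u \mapsto \psi_u(z)$. The key tool is the weak-type $1$--$1$ maximum function inequality (Lemma \ref{lem:mf-est}), applied to each of the integrands $\bigl|\,|\nabla h^-_{r^2}|^2 - 1\,\bigr|$, $|\Hess_{h^-_{r^2}}|^2$, and $\alpha$: outside an exceptional set of volume $\le \tfrac14 \vol B_r(\gamma(t))$, the line integrals of these quantities along the flow are dominated by the corresponding ball averages over a tube around $\gamma$, which are of order $r$, $1$, and $r^2$ respectively. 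Combining this exceptional set with the $\tfrac18$ loss from Step 1 produces a good subset of measure $\ge \tfrac12 \vol B_r(\gamma(t))$ contained in $\mathcal A^t_\eps(r)$, which is the desired estimate.
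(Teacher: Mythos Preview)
There is a genuine circularity in your argument that the maximal function does not resolve. To convert the tube-averaged estimates of Theorem~\ref{t:mainregthm1} into control along an individual flow line $u\mapsto\psi_u(z)$ (or along the geodesic from $\psi_u(z)$ to $\gamma(t-u)$), you must already know that $\psi_u(z)$ stays inside $B_{2r}(\gamma(t-u))$; but that is precisely the statement $z\in\mathcal A^t_\eps(r)$ you are trying to prove. The weak $(1,1)$ inequality for $\Mx$ bounds $\sup_\rho\fint_{B_\rho(x)}f$ at a fixed center $x$; it gives no a priori control on $\int_0^\eps f(\psi_u(z))\,du$ or on $\int_0^\eps\int_{\eta_u}|\Hess_h|\,du$ until the trajectory is known to remain in the region where the averaged bounds are available. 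A second difficulty is that your first-variation estimate compares $\psi_u(z)$ to the \emph{single point} $\gamma(t-u)$: the segment inequality (Theorem~\ref{seg-ineq}) needs both endpoints to range over sets of positive comparable measure, so the Hessian line integral cannot be controlled this way.

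The Colding--Naber proof, as the paper summarizes immediately after stating Proposition~\ref{prop-dist-grad}, breaks the circularity by a bootstrap in $r$ and an open--closed argument in $\eps$: for infinitesimally small $r$ (depending on the given smooth $M$) the claim follows pointwise from Bochner's formula along $\gamma$, and one then propagates to larger scales by applying the segment inequality to \emph{pairs} $(x,y)$ ranging over $\mathcal A^t_s(r)\times\mathcal A^t_s(r)$, using that both images $\psi_s(x),\psi_s(y)$ lie in $B_{2r}(\gamma(t-s))$ \emph{for $s$ already known to work}, together with the Jacobian bound from Laplacian comparison. This is exactly the template the paper reuses in its proof of Lemma~\ref{lem: distort1}, where the already-established Proposition~\ref{prop-dist-grad} serves as the base case in place of the small-$r$ start. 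Your sketch is missing this inductive mechanism; the excess-monotonicity observation in your first step is correct and is indeed used in \cite{CoNa}, but by itself it only says $\psi_u(z)$ stays in a thin excess-sublevel set, which is far larger than the $2r$-tube.
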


Unlike Colding-Naber we prefer to work with the gradient flow of the parabolic approximation $h$ rather than the gradient flows of $d^\pm$, because the gradient flow of $h$ provides better distance distortion estimates since in that case the two terms outside the integral in Lemma~\ref{lem:dist-growth}  vanish and the resulting inequality scales better in the estimates involving maximum function (see Lemma ~\ref{lem:dtr} below). Therefore, our first order of business is to establish the following lemma which says that Proposition \ref{prop-dist-grad} holds for the gradient flow of $-h$ as well:

\begin{lem}\label{lem: distort1}
There exists $r_1(n,\delta)$ and $\epsilon_1(n,\delta)$ such that if $\delta<t-\eps<t<1-\delta$ and  $\eps\leq \eps_1$ then $\forall r\leq r_1$
 we have
\[
\frac{1}{2}\leq \frac{\vol(\mathcal{A}^{t}_{\eps}(r))}{\vol(B_r(\gamma(t)))}
\]
and
\[
\frac{1}{2}\leq \frac{\vol(\mathcal{B}^{t}_{\eps}(r))}{\vol(B_r(\gamma(t)))}\,.
\]

\end{lem}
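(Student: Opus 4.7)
The first of the two inequalities is exactly Proposition~\ref{prop-dist-grad}, so I would take $r_1\le r_0(n,\delta)$ and $\eps_1\le \eps_0(n,\delta)$ from that result and concentrate on the second (for the reverse gradient flow $\phi_u$ of the smooth parabolic approximation $h=h^-_{r^2}$).

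The plan is to adapt the strategy of~\cite{CoNa}, exploiting the smoothness of $X=-\nabla h$. First I would apply Lemma~\ref{lem:dist-growth} with $\sigma_1(u)=\phi_u(x)$ and $\sigma_2(u)=\gamma(t-u)$. Since $\sigma_1$ is an integral curve of $X$, the first boundary term vanishes identically, producing
\[
\frac{d^+}{du}\,d(\phi_u(x),\gamma(t-u))\le |\nabla h-\gamma'|(\gamma(t-u))+\int_{\gamma_{\phi_u(x),\gamma(t-u)}}|\Hess_h|.
\]
Integrating in $u$ and taking the supremum over $s\in[0,\eps]$ yields
\[
\max_{s\in[0,\eps]} d(\phi_s(x),\gamma(t-s))\le r+I_1+I_2(x),
\]
where $I_1=\int_0^\eps|\nabla h-\gamma'|(\gamma(t-u))\,du$ is independent of $x$ and $I_2(x)=\int_0^\eps\int_{\gamma_{\phi_u(x),\gamma(t-u)}}|\Hess_h|\,du$.

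To control $I_2$ on average in $x$ I would use the segment inequality (Theorem~\ref{seg-ineq}) applied to $|\Hess_h|$ at each time $u$, which together with a Cauchy--Schwarz in $u$ and the integrated Hessian estimate from Theorem~\ref{t:mainregthm1}(iv) gives a bound of the form $\fint_{B_r(\gamma(t))}I_2(x)\,dx\le C(n,\delta)\sqrt{\eps}\,r$. To control $I_1$ I would expand $|\nabla h-\gamma'|^2=|\nabla h|^2-2\langle\nabla h,\gamma'\rangle+1$ and use the identity $\int_0^\eps \langle\nabla h,\gamma'\rangle(\gamma(t-u))\,du = h(\gamma(t))-h(\gamma(t-\eps)) = \eps+O(r^2)$, which follows from Theorem~\ref{t:mainregthm1}(i). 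A Chebyshev argument applied to $\max_{s\in[0,\eps]}d(\phi_s(x),\gamma(t-s))$ then delivers the desired volume lower bound provided $\eps$ is chosen small relative to $(n,\delta,H)$.

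The main technical obstacle is in $I_1$: Theorem~\ref{t:mainregthm1}(iii) only bounds $\bigl||\nabla h|^2-1\bigr|$ averaged over balls, not pointwise along $\gamma$, so a pointwise bound on $|\nabla h|$ along the geodesic requires either a mean-value argument exploiting the heat-kernel smoothness of $h$, or a reformulation in which one compares $\phi_u(x)$ and $\phi_u(z_0)$ for a reference point $z_0\in\mathcal A^t_\eps(r)$ supplied by the first part of the lemma. In the latter formulation both boundary terms in Lemma~\ref{lem:dist-growth} vanish and the whole argument reduces to the segment-inequality bound on $|\Hess_h|$, together with Proposition~\ref{prop-dist-grad} used to relate $\phi_u(z_0)$ to $\gamma(t-u)$.
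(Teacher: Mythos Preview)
Your plan has the right ingredients but two genuine gaps that the paper's argument handles differently.

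First, the segment-inequality bound on $I_2$ cannot be applied as stated: at time $u$ the endpoint $\phi_u(x)$ is not a priori known to lie in any ball of radius comparable to $r$ around $\gamma(t-u)$ --- that is precisely the conclusion you are after. The paper deals with this by multiplying the integrand by the characteristic function of $\mathcal A^t_s(r)\times\mathcal B^t_s(r)$ and running an open--closed (continuity) argument in $s$: one works only on the set where the trajectory is already known to stay in the tube, obtains estimates that are uniform there, and uses them to show this set is closed. Your direct ``integrate, take the sup, apply Chebyshev'' scheme skips this bootstrap and therefore cannot get off the ground.

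Second, your proposed reformulation --- comparing $\phi_u(x)$ with $\phi_u(z_0)$ for a reference point $z_0\in\mathcal A^t_\eps(r)$ --- is circular. Membership in $\mathcal A^t_\eps(r)$ controls $\psi_u(z_0)$, not $\phi_u(z_0)$, and Proposition~\ref{prop-dist-grad} says nothing about the $h$-flow of $z_0$. The paper's actual fix is a \emph{hybrid} comparison: one curve is $\psi_s(x)$ (the $d^-$-flow of a reference point, whose location is controlled by Proposition~\ref{prop-dist-grad} applied at scale $r/100$) and the other is $\phi_s(y)$ (the $h$-flow of the point under study). In Lemma~\ref{lem:dist-growth} the boundary term at $\phi_s(y)$ then vanishes, while the boundary term at $\psi_s(x)$ is $|\nabla h-\nabla d^-|(\psi_s(x))$, which is controlled by \cite[Lemma~2.20(3)]{CoNa} together with the excess bound of Theorem~\ref{thm-gen-ab-gr}. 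This replaces your problematic pointwise control of $|\nabla h|$ along $\gamma$ by an integral bound on $|\nabla h-\nabla d^-|$ along a $\psi$-trajectory, which is exactly what is available.
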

The proof of Proposition \ref{prop-dist-grad}  uses bootstrapping in $\eps, r$ starting with infinitesimally small (depending on $M$!) $r$ (cf. Lemma~\ref{lem:dtr} below) for which the claim easily follows from Bochner's formula applied to $d^-$ along $\gamma$.
We don't utilize bootstarpping in $r$ and instead use that the result has already been established for the gradient flow of $-d^-$.

\begin{proof}
Of course, we only need to prove the second inequality as the first one holds by  Proposition \ref{prop-dist-grad} for some  $r_0(n,\delta), \epsilon_0(n,\delta)>0$. By possibly making $r_0$ smaller we can ensure that it satisfies Theorem \ref{t:mainregthm1}.

Let $0<\eps<\eps_0$ be small (how small it will be chosen later). Let
\begin{equation}
S_t\equiv\left\{0\le s<t-\delta: \frac{1}{2}< \frac{\vol(\mathcal{B}^{t}_{s}(r))}{\vol(B_r(\gamma(t)))} \right\}\,.
\end{equation}
We wish to show that $S_t$ contains $[0,\eps]$ for some uniform $\eps=\eps(n)$.
Obviously $S_t$ is open in $[0,\eps]$  so it's enough to show that it's also closed.
To establish  this it's enough to show that if $\eps'\le\eps$ and  $[0,\eps')\subset S_t$ then $\eps'\in S_t$.

For any $0<s<t$ we define $\tilde c_s^t$ to be the characteristic function of the set $\mathcal{A}_s^t(r)\times\mathcal{B}_s^t(r)$.
The same argument as in ~\cite{CoNa} shows that
\begin{align}\label{eq-col-nab1}
 &\fint_{B_r(\gamma(t))\times B_r(\gamma(t))}\tilde c_s^t(x,y)\,\left(\int_{\gamma_{\psi_s(x),\phi_s(y)}}|\Hess_h|\right)\,d\vol_x\, d\vol_y
\\
 &\qquad\leq C(n,\delta)\,r\,\left(\frac{\vol(B_r(\gamma(t-s)))}{\vol(B_r(\gamma(t)))}\right)^{2} \fint_{B_{5r}(\gamma(t-s))}|\Hess_h|\,.\notag
\end{align}

Indeed, we have
\begin{align}
  &\int_{B_r(\gamma(t))\times B_r(\gamma(t))}\tilde c_s^t(x,y)\left(\int_{\gamma_{\psi_s(x),\psi_s(y)}}|\Hess_h|\right)\,d\vol_x\, d\vol_y \\
  &= \int_{\mathcal{A}_s^t(r)\times\mathcal{B}_s^t(r)} \left(\int_{\gamma_{\psi_s(x),\psi_s(y)}}|\Hess_h|\right) \,d\vol_x\, d\vol_y
  \notag\\
  &\leq C(n,\delta)\int_{\psi_s(\mathcal{A}_s^t(r))\times\phi_s(\mathcal{B}_s^t(r))} \left(\int_{\gamma_{x,y}}|\Hess_h|\right)\,d\vol_{\bar x}\, d\vol_{\bar y}\,,
  \notag
\end{align}
where the last inequality follows from the fact that  $\Delta h\le c(n,\delta)$ by Lemma~\ref{lem:lap-comp} and hence the Jacobian of $\phi_s$ satisfies
\begin{equation}\label{eq:jac-bnd1}
J_{\phi_s}\ge e^{C(n,\delta)s}\,.
\end{equation}
Similar inequality holds for $\psi_s$ by Bishop-Gromov volume comparison.
 Since $\psi_s(\mathcal{A}_s^t(r))$, $\phi_s(\mathcal{B}_s^t(r)) \subseteq B_{2r}(\gamma(t-s))$ by definition, by  the segment inequality (Theorem~\ref{seg-ineq} )  we have
 \begin{align}
    &\int_{\psi_s(\mathcal{A}_s^t(r))\times\phi_s(\mathcal{B}_s^t(r))} \left(\int_{\gamma_{x,y}}|\Hess_h|\right)\,d\vol_{\bar x}\, d\vol_{\bar y}
    \\
    &\leq C(n,\delta)\,r\,[\Vol(\psi_s(\mathcal{A}_s^t(r)))+\Vol(\phi_s(\mathcal{B}_s^t(r)))] \int_{B_{5r}(\gamma(t-s))}|\Hess_h|
    \notag\\
    &\leq C(n,\delta)\,r\,\Vol(B_{5r}(\gamma(t-s)))\int_{B_{5r}(\gamma(t-s))}|\Hess_h|
    \notag\\
    &=C(n,\delta)\,r\,\Vol(B_{5r}(\gamma(t-s)))^2\fint_{B_{5r}(\gamma(t-s))}|\Hess_h|
    \notag\\
    &\le C(n,\delta)\,r\,\Vol(B_{r}(\gamma(t-s)))^2\fint_{B_{5r}(\gamma(t-s))}|\Hess_h|\,,
    \notag
\end{align}
where the last inequality follows by Bishop-Gromov.
Thus,
\begin{align}
  &\int_{B_r(\gamma(t))\times B_r(\gamma(t))}\tilde c_s^t(x,y)\left(\int_{\gamma_{\psi_s(x),\psi_s(y)}}|\Hess_h|\right) \,d\vol_x\, d\vol_y
  \\
  &\qquad\le C(n,\delta)\,r\,\Vol(B_{r}(\gamma(t-s)))^2\fint_{B_{5r}(\gamma(t-s))}|\Hess_h|\,.\notag
\end{align}
Dividing by $\Vol(B_{r}(\gamma(t)))^2$  we get \eqref{eq-col-nab1}. By \cite[Cor 3.7]{CoNa} we have that
\begin{equation}\label{eq-vol-comp1}
C^{-1}\leq \frac{\vol(B_r(\gamma(t-s)))}{\vol(B_r(\gamma(t)))} \leq C\, .
\end{equation}
for some universal $C=C(n,\delta)$ and therefore
\begin{align}
  &\fint_{B_r(\gamma(t))\times B_r(\gamma(t))}\tilde c_s^t(x,y)\,\left(\int_{\gamma_{\psi_s(x),\phi_s(y)}}|\Hess_h|\right)\,d\vol_x\, d\vol_y
  \label{eq1}\\
  &\qquad\leq C(n,\delta)\,r \fint_{B_{5r}(\gamma(t-s))}|\Hess_h|\,.\notag
\end{align}

Let
\begin{align}
 \tilde I^r_\eps\equiv \fint_{B_r(\gamma(t))\times B_r(\gamma(t))}
 \int_0^\eps \tilde c_s^t(x,y)
 \left(\int_{\gamma_{\psi_s(x),\phi_s(y)}}|\Hess_{h}|\right)
 \,ds\,d\vol_x\,d\vol_y\,.
\end{align}
Then by \eqref{eq1} and  Theorem~\ref{t:mainregthm1} we have that
\begin{align}\label{eq:I}
  \tilde I^r_{\eps'}
  &=\int_0^{\eps'}\fint_{B_r(\gamma(t))\times B_r(\gamma(t))} \tilde c_u^t(x,y) \left(\int_{\gamma_{\psi_u(x),\phi_u(y)}} |\Hess_{h}|\right)\,d\vol_x\,d\vol_y\,ds
  \\
  &\leq C(n,\delta)\,r \int_0^{\eps'}\left(\fint_{B_{5r}(\gamma(t-s))}|\Hess_{h}|\,d\vol \right)ds
  \notag\\
  &\leq C(n,\delta)r \sqrt{\eps'}\left(\int_\delta^{1-\delta} \fint_{B_{5r}(\gamma(s))}|\Hess_{h}|^2\,d\vol\,ds\right)^{1/2}
  \notag\\
  &\leq C(n,\delta)\,\sqrt{\eps'}\,r\, . \notag
\end{align}

Let
\[
\tilde T^r_{\eta}\equiv \left\{x\in B_r(\gamma(t)): x\in \mathcal{A}^{t}_{\eps}(r) \text{ and } \fint_{\{x\}\times B_r(\gamma(t))}\int_0^{\eps'} \tilde c_s^t(x,y)\left(\int_{\gamma_{\psi_s(x),\phi_s(y)}}|\Hess_{h}|\right)\leq \eta^{-1}\tilde I^r_{\eps'}\right\}\, ,
\]
and for $x\in \tilde T^r_\eta$ let us define
\begin{equation}\label{eq:Tx}
\tilde T^r_\eta(x) \equiv \left\{y\in B_r(\gamma(t)): \int_0^{\eps'} \tilde c_s^t(x,y)\left(\int_{\gamma_{\psi_s(x),\phi_s(y)}}|\Hess_{h}|\right)ds\leq \eta^{-2}\tilde I^r_{\eps'}\right\}\, .
\end{equation}

Here $\eta=\eta(n,\delta, d(p,q))>0$ is small and chosen first. Then $\eps$ is chosen later depending on $\eta$.
By \cite[page 34, equations (115) (117)]{CoNa} we  can assume that
\begin{equation}
\frac{\Vol(\mathcal{A}^{t}_{\eps}(r))}{\Vol(B_r(\gamma(t)))}\geq 1-C(n,\delta)\eta\,.
\end{equation}
if $\eps\le C(n,\delta)\eta^{\alpha(n)}$ for some universal $\alpha(n)>1$.
Therefore, by construction we have that
\begin{equation}\label{eq:volratio1}
\frac{\Vol(\tilde T^r_\eta)}{\Vol(B_r(\gamma(t)))}\geq 1-C(n,\delta)\eta\, ,
\end{equation}
and hence
\begin{equation}\label{eq:volratio2}
\frac{\Vol(\tilde T^r_\eta(x))}{\Vol(B_r(\gamma(t)))}\geq 1-C(n,\delta)\eta,\qquad\forall x\in T^r_\eta\, .
\end{equation}
We choose $\eta$ so that $ C(n,\delta)\eta\ll 1$ in \eqref{eq:volratio1} and \eqref{eq:volratio2}.

Let $x\in \tilde T^r_\eta\cap \tilde T^{r/100}_\eta$ (this intersection is non-empty for small $\eta=\eta(n)$ by Bishop-Gromov) and let $y\in \tilde T^r_\eta(x)$.
We will fix $\eta>0$ satisfying the above conditions from now on. We claim that then $y\in \mathcal{B}^{t}_{\eps'}(r)$.

Indeed $d(\psi_s(x),\gamma(t-s))\le r/50$ for all $s\le \eps'$ since $x\in  \tilde T^{r/100}_\eta\subset \mathcal{A}^{t}_{\eps}(r/100)$ .  So by the triangle inequality it's enough to show that $d(\psi_s(x),\phi_s(y))\le 1.1r$ for any $s\le\eps'$.

Let $S=\{s\le \eps':  y \in \mathcal{B}^{t}_{s}(r)\}$. This set is obviously open and connected in $[0,\eps']$. We claim that $S=[0,\eps']$.
Let $\bar s=\sup\{s\co s\in S\}$.

Note that for any $0<s<\bar s$ we have that $\tilde c_s^t(x,y)=1$.
Therefore, by \eqref{eq:I} and \eqref{eq:Tx} for any $0<s<\bar s$ we have
\begin{equation}\label{eq:hess-int1}
\int_0^{ s} \left(\int_{\gamma_{\psi_u(x),\phi_u(y)}}|\Hess_{h}|\right)du\leq \eta^{-2}\tilde I^r_{t-\bar s}\le \frac{C(n,\delta)}{\eta^2}\sqrt{\eps}r\le 0.001 r
\end{equation}
if $\eps=\eps(\eta)$ is chosen small enough.

Next, recall that by \cite[ Lemma 2.20(3)]{CoNa} we have that for any $x\in B_r(\gamma(t))$,
\begin{equation}
\int_{0}^{s}|\nabla h(\psi_u(x))-\nabla d^{-}(\psi_u(x))|du\leq {c(n,\delta)\sqrt{s}}\,
 (\sqrt{e(x)}+r)\,.
\end{equation}
Further, by Theorem~\ref{thm-gen-ab-gr}
we know that
\begin{equation}
\fint_{B_{r}(\gamma(t))} e\leq c(n,\delta)\,r^2\, .
\end{equation}
Therefore, without losing generality by making the sets $\tilde T^r_\eta$ slightly smaller we can assume that for any $x\in \tilde T^r_\eta$ we have
\begin{equation}
e(x)\le \eta^{-1}r^2\,.
\end{equation}
Thus, for all $x\in \tilde T^r_\eta$ we have
\begin{equation}\label{eq:grad-int1}
\int_{0}^{s}|\nabla h (\psi_u(x)-\nabla d^{-}(\psi_u(x)|du\leq {c(n,\delta)\sqrt{s}}\cdot(\eta^{-1/2} r+r)<0.001r
\end{equation}
if $\eps=\eps(n,\delta,\eta)$ is small enough. Therefore, by Lemma~\ref{lem:dist-growth} and using \eqref{eq:hess-int1} and \eqref{eq:grad-int1} we get that
\begin{equation}
  d(\psi_s(x),\phi_s(y))\le 0.002r+d(x,y)<1.1r\,.
\end{equation}
By the triangle inequality,
\begin{equation}
  d(\phi_s(x),\gamma(t-s))\le r/50+1.1r\le1.5r< 2r\,.
\end{equation}

By continuity the same holds for $\bar s$ and hence $\bar s\in S$. Thus $S$ is both open and closed in $[0,\eps']$ and therefore $S=[0,\eps']$.
Unwinding this further we see that this means that $\tilde T^r_\eta(x)\subset  \mathcal{B}^{t}_{\eps'}(r)$.
Therefore, by \eqref{eq:volratio2}
\begin{equation}
\frac{1}{2}\leq \frac{\vol(\mathcal{B}^{t}_{\eps'}(r))}{\vol(B_r(\gamma(t)))}
\end{equation}
when $\eta$ was chosen small enough so that $C(n,\delta)\eta<1/2$.
Hence $\eps'\in S_t$. Therefore, $S_t$ is both open and closed and $\eps'=\eps$.
\end{proof}
The proof of Lemma~\ref{lem: distort1} shows that $\tilde T^r_\eta(x)\subset  \mathcal{B}^{t}_{\eps}(r)$ for appropriately chosen $\eps$ depending on $\eta$.
Moreover, the proof shows that $\eps$ can be chosen to be of the form $\eps=C(n,\delta)\eta^{\alpha(n)}$ for some  $\alpha(n)>1$.
 In view of \eqref{eq:volratio1} this means that the conclusion can be strengthened as follows (cf. \cite{CoNa})
\begin{lem}\label{lem: distort2}
For every $\eta\leq \eta_0(n,\delta)$ and $r\leq r_0(n,\delta)$ that there exists $\eps\equiv \eps(n,\eta,\delta)$ such that the set
$\mathcal{B}^t_\eps(r)\equiv \{z\in B_{r}(\gamma(t)):\phi_s(z)\in B_{2r}(\gamma(t-s))\,\text{  }\, \forall 0\leq s\leq \eps\}$
satisfies
\begin{align*}
 \frac{\vol \mathcal{B}^t_\eps(r)}{\Vol(B_r(\gamma(t)))} \geq 1-C(n,\delta)\,\eta\, .
\end{align*}
Moreover, $\eps(n,\eta,\delta)$ can be chosen to be of the form $\eps=\tilde C(n,\delta)\eta^{\alpha(n)}$ for some  $\alpha(n)>1$.
\end{lem}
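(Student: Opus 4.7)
The plan is to extract from the proof of Lemma \ref{lem: distort1} the stronger conclusion that is actually established by the argument but is only recorded in weak form (ratio $\ge 1/2$) in its statement. Nothing genuinely new needs to be proved; I only need to track the dependencies of $\eps$ on $\eta$ explicitly and upgrade the volume bound from $1/2$ to $1-C(n,\delta)\eta$.

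The first step is to recall that in the proof of Lemma \ref{lem: distort1} we constructed the sets $\tilde T^r_\eta$ and, for each $x\in \tilde T^r_\eta\cap \tilde T^{r/100}_\eta$, the set $\tilde T^r_\eta(x)\subset B_r(\gamma(t))$, and a continuity-in-$s$ argument established the pointwise containment $\tilde T^r_\eta(x)\subset \mathcal{B}^t_{\eps}(r)$ for the appropriately chosen $\eps$. Combining this with the volume estimate \eqref{eq:volratio2} immediately gives
\[
\frac{\vol(\mathcal{B}^t_\eps(r))}{\vol(B_r(\gamma(t)))}\ge \frac{\vol(\tilde T^r_\eta(x))}{\vol(B_r(\gamma(t)))}\ge 1-C(n,\delta)\,\eta,
\]
which is exactly the strengthened volume bound claimed.

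The second step is to make the dependence $\eps=\tilde C(n,\delta)\eta^{\alpha(n)}$ with $\alpha(n)>1$ explicit by collecting all smallness constraints that arose in the proof. The Hessian integral bound \eqref{eq:hess-int1} required $\frac{C(n,\delta)}{\eta^2}\sqrt{\eps}\,r\le 0.001\,r$, i.e.\ $\eps\le C_1(n,\delta)\eta^4$; the gradient comparison \eqref{eq:grad-int1} required $c(n,\delta)\sqrt{\eps}\,(\eta^{-1/2}+1)<0.001$, i.e.\ $\eps\le C_2(n,\delta)\eta$; and the invocation of \cite[eq.\ (115),(117)]{CoNa} inside \eqref{eq:volratio1} itself required a bound of the same form $\eps\le C_3(n,\delta)\eta^{\alpha(n)}$ with some $\alpha(n)>1$ (this is where $\alpha(n)>1$ actually enters and cannot be taken to be $1$). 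Taking $\eps$ less than the minimum of these three, and picking the largest of the exponents involved, yields the desired $\eps=\tilde C(n,\delta)\eta^{\alpha(n)}$ with $\alpha(n)>1$.

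The main (minor) obstacle is just bookkeeping: verifying that every use of the small parameter $\eps$ inside the proof of Lemma \ref{lem: distort1} is in fact polynomially controlled by $\eta$ (with positive exponent), and that one of those uses genuinely forces $\alpha(n)>1$ rather than $\alpha(n)=1$. Both are already visible in the argument; no new ingredient is required.
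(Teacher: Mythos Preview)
Your proposal is correct and follows essentially the same approach as the paper: the paper's argument for this lemma is precisely the observation (recorded in the paragraph immediately preceding the lemma) that the proof of Lemma~\ref{lem: distort1} already establishes $\tilde T^r_\eta(x)\subset \mathcal{B}^{t}_{\eps}(r)$, whence the volume bound \eqref{eq:volratio2} gives the strengthened conclusion, and that the smallness constraints on $\eps$ are all of the form $\eps\le C(n,\delta)\eta^{\alpha(n)}$ with $\alpha(n)>1$. Your explicit bookkeeping of the three constraints is a slight elaboration of what the paper leaves implicit, but the content is the same.
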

When $\eta$ is sufficiently small this means that most points in $B_r(\gamma(t))$ remain close to the geodesic $\gamma$ under the flow $\phi_s$. Also, provided $C(n,\delta)\eta<1/2$  using Bishop-Gromov,  the above lemma, \eqref{eq:jac-bnd1} and \eqref{eq-vol-comp1} give the following

\begin{lem}\label{lem-comp-av1}
Let $F\co M\to\R$ be a nonnegative measurable function. Then

\[
\fint_{\mathcal{B}^t_\eps(r)}F(\phi_s(x))\,d\vol_x
\le C(n,\delta)\fint_{ B_{2r}(\gamma(t-s))}F(\bar x) \,d\vol_{\bar x}
\]
for any $s\le\eps$ as in Lemma \ref{lem: distort2}.
\end{lem}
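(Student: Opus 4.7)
The plan is to prove this by pushing the integral forward along the flow $\phi_s$, using the Jacobian lower bound from~\eqref{eq:jac-bnd1} to handle the change-of-variables factor, and then normalizing via~\eqref{eq-vol-comp1} together with Bishop-Gromov. The key structural input is that by the definition of $\mathcal{B}^t_\eps(r)$ we automatically have $\phi_s(\mathcal{B}^t_\eps(r))\subset B_{2r}(\gamma(t-s))$ for every $s\le\eps$, so the image under the change of variables already fits inside the ball appearing on the right-hand side of the claim.

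Concretely, I would first apply the change of variables $\bar x = \phi_s(x)$, using that $\phi_s$ is a diffeomorphism, to write
\[
\int_{\mathcal{B}^t_\eps(r)} F(\phi_s(x))\,d\vol_x = \int_{\phi_s(\mathcal{B}^t_\eps(r))} \frac{F(\bar x)}{J_{\phi_s}(\phi_s^{-1}(\bar x))}\,d\vol_{\bar x}.
\]
Since $F\ge 0$ and $\phi_s(\mathcal{B}^t_\eps(r))\subset B_{2r}(\gamma(t-s))$, the right-hand side is bounded by $\int_{B_{2r}(\gamma(t-s))} F\cdot (1/J_{\phi_s})$, and by~\eqref{eq:jac-bnd1} the factor $1/J_{\phi_s}$ is bounded above by a universal constant depending only on $n$ and $\delta$ once $s\le\eps\le\eps_0(n,\delta)$. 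This yields the unnormalized inequality
\[
\int_{\mathcal{B}^t_\eps(r)} F(\phi_s(x))\,d\vol_x \le C(n,\delta)\int_{B_{2r}(\gamma(t-s))} F(\bar x)\,d\vol_{\bar x}.
\]

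To pass to averages, I would then compare the two normalizing volumes. Lemma~\ref{lem: distort2}, applied with $\eta$ already fixed so small that $C(n,\delta)\eta\le 1/2$, gives $\vol(\mathcal{B}^t_\eps(r)) \ge \frac{1}{2}\vol(B_r(\gamma(t)))$. Invoking~\eqref{eq-vol-comp1} to exchange $\vol(B_r(\gamma(t)))$ for $\vol(B_r(\gamma(t-s)))$ up to a universal constant, and then Bishop-Gromov to compare $\vol(B_r(\gamma(t-s)))$ with $\vol(B_{2r}(\gamma(t-s)))$ (both radii are bounded above by $2r_0$, so the doubling ratio is bounded below), produces $\vol(\mathcal{B}^t_\eps(r)) \ge c(n,\delta)\vol(B_{2r}(\gamma(t-s)))$. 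Dividing the previous inequality by $\vol(\mathcal{B}^t_\eps(r))$ and absorbing constants gives the claim.

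There is no real obstacle: every ingredient has been established earlier in this section, and the lemma is essentially a packaging of them. The point of recording it as a self-contained statement is that in the next section $F$ will be taken to be $\Mx|\Hess_h|$ or a closely related quantity, and this averaging estimate will let us transfer an $L^1$-type bound at the scale of $B_{2r}(\gamma(t-s))$ back to pointwise control on the full set $\mathcal{B}^t_\eps(r)$ under the flow $\phi_s$.
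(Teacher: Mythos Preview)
Your proposal is correct and follows exactly the approach indicated in the paper. The paper does not spell out a proof but simply states that the lemma follows from Bishop--Gromov, Lemma~\ref{lem: distort2}, \eqref{eq:jac-bnd1}, and \eqref{eq-vol-comp1}; you have unpacked precisely how these four ingredients combine (change of variables with the Jacobian lower bound, containment $\phi_s(\mathcal{B}^t_\eps(r))\subset B_{2r}(\gamma(t-s))$, and then the chain of volume comparisons for the normalization).
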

\begin{rmk}
It is obvious that all results of this section concerning the flow of $-h^-_{r^2}$ are also true for the flow of $-h^+_{r^2}$.
\end{rmk}
\section{Bilipschitz control}\label{sec:bilip}
The goal of this section is to prove the following equivalent version of Theorem \ref{thm-main}

\begin{thm}\label{thm-main1}
Given $H\in\R, 0<\delta<1/3,0<\eta<1$
there exist $ r_0(n,\delta,H)$, $\eps=C(n,\delta,H)\eta^{\alpha(n)}$ where $\alpha(n)>1$ such that the following holds:

Suppose $(M^n,g)$ is complete with $\Ric_M\ge (n-1)H$ and let $\gamma\co [0,1]\to M^n$ be a unit speed minimizing geodesic.
Then for any $t_1,t_2\in (\delta,1-\delta)$ with $|t_1-t_2|<\eps$ and any $r<r_0$ there are subsets $\mathcal C_i^r\subset B_r(\gamma(t_i))$   ($i=1,2$)  such that
\[
\frac{\vol \mathcal C_i^r}{\vol B_r(\gamma(t_i))}\ge 1-\eta
\]
and there exists a $(1+C(n,\delta,H)\eta^{\beta(n)})$-Bilipschitz map $f_r\co \mathcal C_1^r\to \mathcal  C_2^r$ for some  $0<\beta(n)<1$.
\end{thm}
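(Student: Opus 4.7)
The plan is to take $f_r$ to be the restriction of the reverse gradient flow $\phi_s$ of the Colding--Naber parabolic approximation $h = h_{r^2}^-$, evaluated at time $s = |t_1 - t_2|$; without loss of generality $t_1 > t_2$, otherwise we replace $h^-$ by $h^+$. After possibly rescaling $r$ by a factor of $2$ so that Theorem~\ref{t:mainregthm1} applies, Lemma~\ref{lem: distort2} guarantees a set $\mathcal{B}^{t_1}_\eps(r) \subset B_r(\gamma(t_1))$ of relative volume at least $1 - C(n,\delta)\eta$ whose points remain within $B_{2r}(\gamma(t_1-u))$ under $\phi_u$ for $0 \le u \le \eps$, provided $\eps \le \tilde{C}(n,\delta) \eta^{\alpha(n)}$. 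This set is the scaffolding on which $\mathcal{C}_1^r$ will be built.

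The next step is to extract from $\mathcal{B}^{t_1}_\eps(r)$ a subset on which $\phi_s$ is Bilipschitz. The key input is Theorem~\ref{t:mainregthm1}(iv), which combined with the $L^2$ version of Corollary~\ref{cor:Mx} yields
\begin{equation*}
\int_\delta^{1-\delta} \fint_{B_r(\gamma(t))} \bigl(\Mx |\Hess_h|\bigr)^2 \, dt \le C(n,\delta,H).
\end{equation*}
Pulling back along the flow via Lemma~\ref{lem-comp-av1}, applying Fubini, and using Cauchy--Schwarz in the time variable gives the average bound
\begin{equation*}
\fint_{\mathcal{B}^{t_1}_\eps(r)} \int_0^\eps \Mx |\Hess_h|(\phi_u(x))\, du\, d\vol_x \le C(n,\delta,H)\sqrt{\eps}.
\end{equation*}
Markov's inequality then carves out a subset $\mathcal{C}_1^r \subset \mathcal{B}^{t_1}_\eps(r)$ of relative volume at least $1 - C(n,\delta)\eta$ on which the flow-integrated maximum function is at most $C(n,\delta,H)\eta^{-1}\sqrt{\eps}$. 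Setting $\eps = C(n,\delta,H)\eta^{\alpha(n)}$ with $\alpha(n)$ large enough (e.g.\ $\alpha(n) = 3$) this quantity is bounded by $\eta^{\beta(n)}$ for some $0 < \beta(n) < 1$.

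For the Bilipschitz estimate, fix $x, y \in \mathcal{C}_1^r$ and apply Lemma~\ref{lem:dist-growth} with $X = -\nabla h$ to the curves $u \mapsto \phi_u(x)$ and $u \mapsto \phi_u(y)$. Since the velocities agree with $X$ at the endpoints, the boundary terms vanish and
\begin{equation*}
\left|\frac{d^+}{du} d(\phi_u(x), \phi_u(y))\right| \le \int_{\gamma_{\phi_u(x), \phi_u(y)}} |\Hess_h|.
\end{equation*}
A variant of a maximum-function lemma of Kapovitch--Wilking bounds the right-hand side by
$$C(n)\, d(\phi_u(x), \phi_u(y)) \cdot \bigl(\Mx |\Hess_h|(\phi_u(x)) + \Mx |\Hess_h|(\phi_u(y))\bigr)$$
after discarding a pair-set of relative measure at most $C\eta$. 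Gr\"onwall then converts the $\eta^{\beta(n)}$ bound on the flow-integrated maximum function into the desired $(1+C(n,\delta,H)\eta^{\beta(n)})$-Bilipschitz estimate for $\phi_s$ on $\mathcal{C}_1^r$. Setting $\mathcal{C}_2^r := f_r(\mathcal{C}_1^r)$ and using the Jacobian lower bound \eqref{eq:jac-bnd1} together with the Colding--Naber volume comparison \eqref{eq-vol-comp1}, one obtains the volume bound on $\mathcal{C}_2^r$; a further negligible shrinking of $\mathcal{C}_1^r$ restricting to points $x$ with $d(x,\gamma(t_1)) \le (1 - C\eta)r$ ensures $\mathcal{C}_2^r \subset B_r(\gamma(t_2))$.

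The main obstacle I anticipate is the pointwise bound on $\int_{\gamma_{\phi_u(x), \phi_u(y)}} |\Hess_h|$ in terms of the maximum functions at $\phi_u(x)$ and $\phi_u(y)$: the segment inequality Theorem~\ref{seg-ineq} only controls such line integrals in an averaged sense over pairs of endpoints in a ball, so to upgrade to an almost-every-pair statement one has to decompose the connecting geodesic at dyadic scales and use that the maximum functions at both endpoints simultaneously dominate the dyadic averages. This joint dependence on $(x,y)$ forces a pairwise refinement of $\mathcal{C}_1^r$ (rather than the single-point Markov refinement above), which is the technical heart of the argument foreshadowed as the ``Kap--Wil type trick'' in the introduction.
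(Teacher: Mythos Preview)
Your setup matches the paper exactly: $f_r=\phi_s$ with $s=|t_1-t_2|$, the scaffolding set $\mathcal B^{t_1}_\eps(r)$ from Lemma~\ref{lem: distort2}, the bound $\fint_{\mathcal B^{t_1}_\eps(r)}\int_0^\eps \Mx|\Hess_h|(\phi_u(x))\,du\le C\sqrt\eps$ via Lemma~\ref{lem-comp-av1} and Cauchy--Schwarz, and the Markov refinement to $\mathcal C_1^r$ (which the paper calls $\mathcal B^t_\eps(R,\nu)$). The divergence comes at the Bilipschitz step, and there the gap you flag in your last paragraph is real and is not resolved by what you propose.

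The inequality $\int_{\gamma_{p,q}}|\Hess_h|\le C\,d(p,q)\bigl(\Mx|\Hess_h|(p)+\Mx|\Hess_h|(q)\bigr)$ is not what the Kapovitch--Wilking lemma says, and no dyadic-scale telescoping produces it: the segment inequality controls line integrals only in an \emph{averaged} sense over pairs, and averages over balls do not dominate integrals along one-dimensional subsets. Even if such a bound held for a.e.\ pair $(p,q)$, you would need it along the entire curve $u\mapsto(\phi_u(x),\phi_u(y))$, which is a measure-zero set in the product; the ``good pair-set'' you would have to discard depends on $u$, and this makes the Gr\"onwall argument circular. The paper does not attempt Gr\"onwall at all. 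Instead, Lemma~\ref{lem:dtr} takes as input only the single-trajectory quantity $\mu=\int_0^\eps\Mx|\Hess_h|(\phi_u(x))\,du$ (which you have already bounded on $\mathcal C_1^r$) and, by an \emph{induction on the scale $r$} combined with the segment inequality at each scale, outputs directly a distortion bound: for every $r'\le R$ there is a set $B_{r'}(x)''$ of relative volume $\ge 1-C\mu$ on which $|d(\phi_s(p),\phi_s(q))-d(p,q)|\le C r'\mu$ (Corollary~\ref{cor-good-pts}). The Bilipschitz estimate on $\mathcal C_1^r$ is then obtained in Lemma~\ref{lem-bilip} not by an ODE but by a triangle-inequality chain: given $x,y\in\mathcal C_1^r$ at distance $d$, apply Corollary~\ref{cor-good-pts} at \emph{both} $x$ and $y$ with $r'\approx d/2$, use Bishop--Gromov to find a common point $z\in B_{r'}(x)''\cap B_{r'}(y)''$ and nearby good points $x_1,y_1$, and sum the distortion bounds. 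This also explains why the Bilipschitz constant comes out as $1+C\nu^{1/n}$ rather than the $1+C\nu$ one would get from a Gr\"onwall argument: the exponent $1/n$ is the price of using Bishop--Gromov to locate the intermediate points.
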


As before, to simplify notation we will assume that $H=1$ and $\Ric_{M^n}\ge -(n-1)$.

Corollary~\ref{cor:Mx} essentially means that all estimates involving integrals of $|\Hess_h|$ from the previous section remain true for $\Mx_\rho |\Hess_h|$.
In particular, for $r_0$ as in Theorem  ~\ref{t:mainregthm1}
and any $r\le r_0/10$ we have
\begin{equation}\label{eq:hess2}
\int_{\delta}^{1-\delta}\left(\fint_{B_{4r}(\gamma(t))}|\Hess_{h_{16r^2}}|^2\right)\,dt\leq \frac{C}{\delta}\, .
\end{equation}
By Corollary~\ref{cor:Mx} this implies that
\begin{equation}\label{eq:mx-hess1}
\int_{\delta}^{1-\delta}\left(\fint_{B_{2r}(\gamma(t))}(\Mxr |\Hess_h|)^2\right)\,dt\leq \frac{C}{\delta}\, .
\end{equation}
where $h=h^-_{16r^2}$. It is clear that all  results from the previous section work for this $h$ as well as $h^-_{r^2}$.
Therefore, everywhere in the previous section where we used \eqref{eq:hess2} we could have used \eqref{eq:mx-hess1} instead. Indeed, we have for any $2\delta<t<1-2\delta$,
\begin{align}
  &\int_0^\eps\left(\fint_{B_{2r}(\gamma(t-s))}\Mxr |\Hess_h|\right)\,ds
\\
  &\le \sqrt\eps \cdot\int_0^\eps\left(\fint_{B_{2r}(\gamma(t-s))}\Mxr |\Hess_h|\right)^2\,ds
  \notag\\
  &\le \sqrt\eps \cdot \int_0^\eps\fint_{B_{2r}(\gamma(t-s))}( \Mxr |\Hess_h|)^2\, ds
  \le C(n,\delta) \sqrt\eps\,.\notag
\end{align}
In view of Lemma~\ref{lem-comp-av1} this  implies that

\begin{equation}
\fint_{\mathcal{B}^t_\eps(r))}\int_0^\eps  \Mxr |\Hess_h(\phi_s(x))|\,ds\le C(n,\delta) \sqrt\eps\,.
\label{e.max.small}
\end{equation}
This means that for most points $x\in \mathcal{B}^t_\eps(r)$ the integral $\int_0^\eps \Mxr |\Hess_h(\phi_s(x))|\,ds$ is bounded. More precisely, given any  $0<\nu<1$
let
\begin{equation}\label{eq:hess-bnd-most-pts1}
 \mathcal{B}^t_\eps(r,\nu)\equiv\left\{x\in  \mathcal{B}^t_\eps(r):\quad \int_0^\eps \Mxr |\Hess_h(\phi_s(x))|\,ds\le \frac {C(n,\delta)\sqrt\eps} {\nu}\right\}\,.
\end{equation}
 Then
\begin{equation}\label{eq:hess-bnd-most-pts2}
 \frac{\vol \mathcal{B}^t_\eps(r,\nu) }{\vol \mathcal{B}^t_\eps(r)}>1-\nu\,.
\end{equation}

We will need  the following slight modification of Lemma 3.7 from~\cite{Kap-Wil}
\begin{lem}\label{lem:dtr}
Given $c>0$, there exists (explicit) $C=C(n,\lambda)$ such that the following holds.
Suppose  $(M^n,g)$ has $\Ric_{M^n}\ge -(n-1)$ and $X^t$ is a vector field
with compact support, which depends on time  but is piecewise constant in time.
Let $c(t)$ be the integral curve of $X^t$ with $c(0)=p_0\in M^n$
and assume that $\divr X^t\ge -\lambda$ 
on $B_{10}(c(t))$ for all $t\in [0,1]$.

Let $\phi_t$ be the flow of $X^t$. Define the distortion function  $\dt_r(t)(p,q)$
of the flow
on scale $r$ by the formula
\begin{align}
\dt_r(t)(p,q):=\min\Bigl\{r,\;\max_{0\le\tau\le t}\bigm|d(p,q)-d(\phi_\tau(p),\phi_\tau(q))|\Bigr\}\,.
\end{align}
Put
$\mu=\int_0^1\Mx_1(\|\nabla_\cdot X^t\|)(c(t))\,dt$. Then for any $r\le 1/10$ we have
\begin{eqnarray}\label{eq:aver1}
\fint_{B_r(p_0)\times B_r(p_0)}\dt_r(1)(p,q)\,d\vol_p\, d\vol_q\le C r\cdot \mu
\end{eqnarray}
and there exists $B_r(p_0)'\subset B_r(p_0)$ such that
\begin{align}
  \frac{\vol(B_r(p_0)')}{\vol(B_r(p_0))}\ge1-C\mu
\end{align}
and $\phi_t(B_r(p_0)')\subset B_{2r}(c(t))$.


\end{lem}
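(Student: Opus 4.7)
The plan is to carry over the argument of Lemma~3.7 of~\cite{Kap-Wil} with minor modifications to handle the time dependence. Since $X^t$ is piecewise constant in $t$, the first variation formula (Lemma~\ref{lem:dist-growth}) may be applied on each constant piece and summed: provided the trajectories $\phi_s(p),\phi_s(q)$ remain in a domain through which they are joined by smooth minimizing geodesics, one obtains
\begin{equation*}
  \bigl|d(\phi_t(p),\phi_t(q))-d(p,q)\bigr|\le \int_0^t\int_{\gamma_{\phi_s(p),\phi_s(q)}}|\nabla_\cdot X^s|\,ds,
\end{equation*}
while $\divr X^s\ge -\lambda$ on $B_{10}(c(s))$ yields the Jacobian estimate $J_{\phi_s}\ge e^{-\lambda s}$ wherever the flow remains in this region.

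I would then set up a continuity/bootstrap argument on the set $A_t\equiv\{p\in B_r(p_0):\phi_s(p)\in B_{2r}(c(s))\text{ for all }s\le t\}$. Let $T$ be the supremum of those $t\in[0,1]$ for which $\vol(A_t)/\vol(B_r(p_0))\ge 1-C_0\mu$ for a constant $C_0=C_0(n,\lambda)$ to be fixed. The goal is to prove $T=1$ (in the nontrivial regime $C_0\mu<1/2$); then $A_1$, shrunk by one further Chebyshev step, will serve as the required set $B_r(p_0)'$.

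The heart of the argument is a bilinear distortion bound on $A_T$. Using the Jacobian estimate to change variables and applying Theorem~\ref{seg-ineq} inside $B_{2r}(c(s))$,
\begin{equation*}
  \fint_{\phi_s(A_T)\times\phi_s(A_T)}\int_{\gamma_{x,y}}|\nabla_\cdot X^s|\,d\vol_x\,d\vol_y \le C(n,\lambda)\,r\,\fint_{B_{4r}(c(s))}|\nabla_\cdot X^s|,
\end{equation*}
and Bishop--Gromov volume comparison bounds the right-hand side by $C(n,\lambda)\,r\,\Mx_1(\|\nabla_\cdot X^s\|)(c(s))$. Integrating in $s$ and recalling the definition of $\mu$ gives $\fint_{A_T\times A_T}\dt_r(T)(p,q)\,d\vol_p\,d\vol_q\le C(n,\lambda)\,r\,\mu$, and extending trivially across the small complement of $A_T\times A_T$ (where $\dt_r\le r$ by definition) yields the averaged estimate~(\ref{eq:aver1}).

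To close the bootstrap, Chebyshev produces a subset $B_r(p_0)'\subset A_T$ of relative measure at least $1-C(n,\lambda)\mu$ such that, for each $p\in B_r(p_0)'$, the set of $q\in A_T$ with $\dt_r(T)(p,q)<r/10$ has almost full measure. Combining this via Fubini with Bishop--Gromov applied to a small ball about $p_0$ produces, for every such $p$, a witness $q_0$ lying close to $p_0$ with small $\dt_r(T)(p,q_0)$; the triangle inequality together with $c(s)=\phi_s(p_0)$ then yields the \emph{strict} bound $d(\phi_s(p),c(s))<2r$ for all $s\le T$. By continuity of $\phi$ this strict containment persists for a short time beyond $T$, contradicting maximality of $T$ unless $T=1$. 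The main obstacle is calibrating the constants in the Chebyshev steps, the bootstrap threshold $C_0$, and the Bishop--Gromov lower bound on the auxiliary small ball about $p_0$ so that the nested almost-full-measure inclusions compose into a genuine strict open condition capable of surviving past time $T$.
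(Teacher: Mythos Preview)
Your time-bootstrap has a genuine gap at the step where you try to close the continuity argument. For $p\in B_r(p_0)'$ you locate a witness $q_0$ close to $p_0$ with $\dt_r(T)(p,q_0)$ small, and then invoke the triangle inequality to bound $d(\phi_s(p),c(s))$. But that triangle inequality requires $d(\phi_s(q_0),c(s))=d(\phi_s(q_0),\phi_s(p_0))$ to be small --- of order, say, $r/5$ --- and nothing in your argument supplies this. Membership $q_0\in A_T$ only gives $d(\phi_s(q_0),c(s))<2r$, which is useless; your averaged distortion bound gives no pointwise information along the measure-zero slice $\{p_0\}$; and there is no a priori Lipschitz control on $\phi_s$ on a small ball about $p_0$ at this stage of the argument. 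A related difficulty: your segment-inequality step tacitly needs $\vol(B_{4r}(c(s)))/\vol(B_r(p_0))$ bounded above, but with only a one-sided bound $\divr X^t\ge -\lambda$ and a bootstrap anchored only at $p_0$, you cannot rule out the flow carrying $c(s)$ into a region where balls are much fatter.

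The paper supplies exactly the missing ingredient by running an \emph{induction on the scale $r$} (from $r/10$ to $r$) rather than a bootstrap in $t$. The base case (infinitesimally small $r$, depending on $M$) follows from the pointwise Bilipschitz estimate on $d\phi_s|_{c(0)}$. The induction hypothesis at scale $r/10$, applied at \emph{every} point $c(t)$ along the curve and for both forward and backward times, produces sets $B_{r/10}(c(t))'$ with $\phi_s(B_{r/10}(c(t))')\subset B_{r/5}(c(t+s))$. Points of $B_{r/10}(p_0)'$ then serve as the anchors you need to close the containment $\phi_s(B_r(p_0)')\subset B_{2r}(c(s))$, and the same induction hypothesis yields the two-sided volume comparability along $c(t)$ (the paper's~\eqref{eq:comp1}) required for the segment-inequality bound. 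The difficulty you flag is not a matter of calibrating constants; the structural mechanism of scale induction is essential.
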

This lemma immediately implies
\begin{cor}\label{cor-good-pts}
Under the assumptions of Lemma~\ref{lem:dtr} for every $r\le 1/10$ there exists
$B_r(p_0)''\subset  B_r(p_0)$ such that
\[
\frac{
\vol(B_r(p_0)'')}{\vol(B_r(p_0))}\ge 1-C(n,\lambda)\mu
\]
\[
\phi_t(B_r(p_0)'')\subset B_{2r}(c(t)) \text{  for all } t\in [0,1]
\]
and
\[
\forall p,q\in B_r(p_0)'',\quad \dt_r(1)(p,q)\le C(n,\lambda) r\cdot \mu\,.
\]

\end{cor}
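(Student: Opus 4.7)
The plan is to deduce the three conclusions from Lemma~\ref{lem:dtr} by upgrading the averaged distortion bound (\ref{eq:aver1}) to a pointwise bound on a sufficiently large subset. Since Lemma~\ref{lem:dtr} already supplies the subset $B_r(p_0)'$ of near-full measure on which $\phi_t$ stays inside $B_{2r}(c(t))$, the conclusions concerning volume and containment will be inherited by taking $B_r(p_0)''\subset B_r(p_0)'$, and the nontrivial content is the pointwise distortion bound.

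My approach is a Chebyshev/Fubini argument. Set $F(p):=\fint_{B_r(p_0)}\dt_r(1)(p,q)\,d\vol_q$, so that Fubini applied to (\ref{eq:aver1}) gives $\fint_{B_r(p_0)} F(p)\,d\vol_p\le Cr\mu$. Markov's inequality then produces a subset $E\subseteq B_r(p_0)$ with $\vol(B_r(p_0)\setminus E)\le C(n,\lambda)\mu\,\vol B_r(p_0)$ on which $F(p)\le C_1 r\mu$ for a constant $C_1=C_1(n,\lambda)$. Define $B_r(p_0)'':=E\cap B_r(p_0)'$; then a union bound gives the volume estimate and the containment $\phi_t(B_r(p_0)'')\subset B_{2r}(c(t))$ follows automatically. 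For the pointwise bound, given $p_1,p_2\in B_r(p_0)''$ another application of Markov's inequality to the inner average $F(p_i)\le C_1 r\mu$ shows that the sub-level sets $G_{p_i}:=\{q\in B_r(p_0):\dt_r(1)(p_i,q)\le C_2 r\mu\}$ have density close to one in $B_r(p_0)$, so $G_{p_1}\cap G_{p_2}$ is non-empty and we may select a common intermediate point $q$. The bound $\dt_r(1)(p_1,p_2)\le C(n,\lambda)r\mu$ should then follow by combining the estimates $\dt_r(1)(p_i,q)\le C_2 r\mu$ with the triangle inequality in both the original and the flowed configurations, taking $q$ close enough to the minimizing geodesic from $p_1$ to $p_2$ to control the triangle defect.

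The main obstacle I anticipate is calibrating the constants so that the volume loss is genuinely of order $\mu$ rather than of constant order: a crude Markov extraction loses a fixed fraction of $B_r(p_0)$, whereas the corollary asks for a $\mu$-fraction. To get the sharp density, my plan is to refine the Markov step by integrating the maximum function $\Mxr(|\nabla_\cdot X^t|)$ along the flow trajectory $s\mapsto\phi_s(p)$, in the spirit of (\ref{e.max.small}), and to define $B_r(p_0)''$ as the subset of $B_r(p_0)'$ on which this trajectory integral is $\le K\mu$ for a suitable $K=K(n,\lambda)$. Combined with the Lemma~\ref{lem:dist-growth} estimate applied to the geodesic $\gamma_{\phi_t p_1,\phi_t p_2}$, which sits inside $B_{4r}(c(t))$ since both of its endpoints lie in $B_{2r}(c(t))$, this should produce the claimed pointwise distortion bound with both volume loss and distortion of the required order $\mu$.
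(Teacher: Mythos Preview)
The paper gives no proof; it simply asserts that Lemma~\ref{lem:dtr} ``immediately implies'' the corollary, so there is no paper argument to compare yours against.

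Your outline does not close, and the gap is more serious than the constant-calibration issue you flag. The double-Markov scheme fails because $\dt_r$ satisfies no triangle inequality: the best one gets (via two applications of the ordinary triangle inequality, before and after the flow) is $\dt_r(p_1,p_2)\le 2d(p_1,q)+\dt_r(p_1,q)+\dt_r(q,p_2)$. To make the $2d(p_1,q)$ term of order $r\mu$ you therefore need a common good point $q$ within distance $O(r\mu)$ of $p_1$, which forces $G_{p_1}\cap G_{p_2}$ to have density at least $1-c(n)\mu^n$ in $B_r(p_0)$; pushing this back through Chebyshev makes the distortion threshold blow up like $r\mu^{1-n}$. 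Taking $q$ near the geodesic midpoint fares no better: it controls $d(\phi_\tau p_1,\phi_\tau p_2)$ from above but not from below. Your proposed refinement---defining $B_r(p_0)''$ via $\int_0^1 \Mxr(\|\nabla_\cdot X\|)(\phi_s(p))\,ds\le K\mu$---is also unavailable under the hypotheses of Lemma~\ref{lem:dtr}: the lemma only assumes this integral equals $\mu$ along the \emph{single} curve $c(t)=\phi_t(p_0)$, with no control whatsoever at other $p\in B_r(p_0)$. Estimate~(\ref{e.max.small}), which you invoke as a model, is derived from the $L^2$ Hessian bound of Theorem~\ref{t:mainregthm1} together with Lemma~\ref{lem-comp-av1}; neither ingredient is part of the abstract setup of the lemma. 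And even granting such a trajectory bound, applying Lemma~\ref{lem:dist-growth} to the single geodesic $\gamma_{\phi_t p_1,\phi_t p_2}$ still only yields averaged control on $\int_\gamma\|\nabla_\cdot X\|$ via the segment inequality, not the pointwise estimate you need.
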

In~\cite{Kap-Wil} Lemma~\ref{lem:dtr} is stated for divergence free vector fields.
We want to apply it to $X=-\nabla h$ which is not divergence free.  However, recall that by Lemma~\ref{lem:lap-comp}  it does satisfy $\divr X\ge -\lambda(n,\delta)$ and hence the Jacobian of its flow map satisfies
\begin{equation}\label{eq:jacb2}
J_{\phi_s}\ge e^{-\lambda(n,\delta) s}
\end{equation}
pointwise.

The proof given in ~\cite{Kap-Wil} goes through verbatim with a straightforward change in one place using \eqref{eq:jacb2} instead of the flow of $X$ being volume-preserving.
We include the proof here for reader's convenience.

\begin{proof}[Proof of Lemma~\ref{lem:dtr}] We prove the statement for a constant in time
vector field $X^t$. The general case is completely analogous
except for additional notational problems.

Notice that all estimates are trivial if $\mu\ge \tfrac{2}{ C}$.
Therefore it suffices to prove the statement with a universal constant $C(n,\lambda)$
for all $\mu\le \mu_0(n,\lambda)$.   We put $\mu_0=\frac1{2C}$ and determine $C\ge 2$ in the process.
We  proceed by induction on the size of $r$.

Notice that the differential of
$\phi_s$ at $c(0)$  is Bilipschitz
with Bilipschitz constant
\begin{align}
  e^{\int_0^s \|\nabla_{\cdot} X\|(c(t))dt}\le 1+2\mu\,.
\end{align}
Thus the Lemma holds
for very small $r$.

Suppose the result holds for some $r/10\le 1/100$.
It suffices to prove that it then holds for $r$.
By induction assumption we know that for any $t$ there exists
$B_{r/10}(c(t))'\subset B_{r/10}(c(t))$ such that for any $s\in [-t,1-t]$ we have
\begin{align}
\vol (B_{r/10}(c(t))')\ge (1-C\mu)\vol (B_{r/10}(c(t)))\ge \frac{1}{2}\vol (B_{r/10}(c(t)))
\end{align}
and
\begin{align}
\phi_s(B_{r/10}(c(t))')\subset B_{r/5}(c(t+s))\,,
\end{align}
where  we used $\mu \le \tfrac{1}{2C}$ in the inequality.
This easily implies that $\vol (B_{r/10}(c(t)))$ are comparable for all $t$.
More precisely, for  any $t_1, t_2\in [0,1]$ we have that
\begin{equation}\label{eq:comp1}
\tfrac{1}{C_0}\vol B_{r/10}(c(t_1))\le \vol B_{r/10}(c(t_2))\le C_0\vol B_{r/10}(c(t_1))
\end{equation}
with a computable universal $C_0=C_0(n)$.
Put
\begin{align}
h(s)=\fint _{B_{r/10}(c(0))'\times B_r(c(0))}\dt_r(s)(p,q)\,\,d\vol_p\,d\vol_q\,,
\end{align}
\begin{eqnarray}
  U_s&:=&\{(p,q)\in B_{r/10}(c(0))'\times B_r(c(0))\mid \dt_r(s)(p,q)<r\}\,,
  \\
  \phi_s(U_s)&:=&\{(\phi_s(p),\phi_s(q))\mid (p,q)\in U_s\}\,,\hspace*{1em}\mbox{and}
  \notag\\
  \dt_r'(s)(p,q)&:=&\limsup_{h\searrow 0} \tfrac{\dt_r(s+h)(p,q)-\dt_r(s)(p,q)}{h}\,.
  \notag
\end{eqnarray}
As $\dt_r(t)\le r$ is monotonously increasing,
we deduce that if $\dt_r(s)(p,q)= r$, then $\dt_r'(s)(p,q)=0$.
Since $\dt_r(s+h)(p,q)\le
\dt_r(s)(p,q)+\dt_r(h)(\phi_s(p),\phi_s(q))$ and $\phi_s$
satisfies $J_{\phi_t}\ge e^{-\lambda t}$
it follows
\begin{eqnarray}
  h'(s)\le\fint_{U_s}\dt_r'(\phi_1(x),\phi_s(y))&\le&e^{s\lambda} \fint_{\phi_s(U_s)}\dt_r'(0)(p,q)\\
  &\le&e^{s\lambda} \tfrac{4\vol B_{3r}(c(s))^2}{\vol B_{r/10}(c(0))^2}\fint_{B_{3r}(c(s))^2}
\dt_r'(0)(p,q)\,,\notag
\end{eqnarray}
where we used that $\phi_s(B_{r/10}(p_0)')^2\subset \phi_s(U_s)\subset B_{3r}(c(s))^2$. We would like to point out that using $J_{\phi_s}\ge e^{-\lambda s}$ instead of $J_{\phi_s}=1$ (which is true for flows of harmonic maps)
in the above inequality is the only place where the proof of Lemma~\ref{lem:dtr} differs from the proof of ~\cite[Lemma 3.7]{Kap-Wil}.)

If $p$ is not in the cut locus of $q$
and $\gamma_{pq}\colon [0,1]\rightarrow M$
is a minimal geodesic between $p$ and $q$,
then by Lemma~\ref{lem:dist-growth}
\begin{align}
dt'_r(0)(p,q)\le d(p,q)+\int_0^1\|\nabla_\cdot X\|(\gamma_{pq}(t))\,dt\,.
\end{align}
Combining the last two inequalities with the segment inequality
we deduce

\begin{eqnarray}
h'(s)&\le& C_1(n,\lambda)r\fint_{B_{6r}(c(s))}\|\nabla_{\cdot}X\|\\
&\le& C_1(n,\lambda)r\Mx\!_1\|\nabla_{\cdot}X\|(c(s))\,.\notag
\end{eqnarray}
Note that the choice of the constant $C_1(n,\lambda)$ can be made explicit and  {\it independent}
of the induction assumption. We deduce $h(1)\le C_1(n,\lambda)r\mu$ and thus
the subset
\begin{align}
B_r(p_0)':= \biggr\{ p\in B_r(p_0)\Bigm| \fint_{B_{r/10}(p_0)'}\dt_r(1)(p,q)\,d\vol_q\le r/2\biggr\}
\end{align}
satisfies
\begin{eqnarray}\label{vol est1}
\vol(B_r(p_0)')\ge (1-2C_1(n,\lambda)\mu)\vol(B_r(p_0))\,.
\end{eqnarray}
It is elementary to check
that
\begin{align}
\phi_t(B_r(p_0)')\subset B_{2r}(c(t)) \mbox{ for all $t\in [0,1]$\,.}
\end{align}
Then arguing as before we estimate that
\begin{align}
\fint_{B_r(p_0)'\times B_r(p_0)}dt_r(1)(p,q)d\vol_p d\vol_q\le C_2(n,\lambda)\cdot r\cdot \mu\,.
\end{align}
Using $\dt_r(1)\le r$ and the volume estimate (\ref{vol est1}) this gives
\begin{align}
\fint_{B_r(c(p_0))^2}\dt_r(1)(p,q)\,d\vol_p\, d\vol_q\le C_2\cdot r\cdot \mu+2rC_1\mu=: C_3r\mu\,.
\end{align}
This completes the induction step with $C(n,\lambda)=C_3$ and $\mu_0=\tfrac{1}{2C_3}$.
In order to remove the restriction $\mu\le \mu_0$ one can just increase $C(n,\lambda)$ by the factor $4$, as indicated at the beginning.
\end{proof}

\begin{rmk}
It's obvious from the proof that Lemma~\ref{lem:dtr} and Corollary ~\ref{cor-good-pts} remain valid for $r\le \rho/10$ if we change $\Mx _1$ to $\Mx _\rho$ in the assumptions.
\end{rmk}


We can now finish the proof of Theorem~\ref{thm-main1} by establishing the following

\begin{lem}\label{lem-bilip}
Fix a small $R<r_0/10$ and let $\nu=\eta\ll 1$ and let  $\eps\le \frac{\nu^4}{C^2(n,\delta) }$  (where $C(n,\delta)$ is the constant in \eqref{eq:hess-bnd-most-pts1} ) satisfies Lemma~\ref{lem: distort2}.
Then for any $s\le\eps$ we have
\begin{enumerate}
\item \label{bilip-i} the map $\phi_s|_{\mathcal{B}^t_\eps(R,\nu)}$ is $(1+(C(n,\delta)\nu^{1/n})$-Bilipschitz onto its image,
\item \label{holder-incl-ii}
\begin{equation*}\label{eq:holder-incl}
\phi_s(\mathcal{B}^t_\eps(R,\nu))\subset B_{(1+C(n,\delta)\nu^{1/n})R}(\gamma(t-s))
\end{equation*}
for any $s\le\eps$.
\end{enumerate}
\end{lem}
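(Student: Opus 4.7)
The plan is to deduce both parts from Corollary~\ref{cor-good-pts} applied at two scales. The hypothesis $\eps \le \nu^4/C^2(n,\delta)$ together with~\eqref{eq:hess-bnd-most-pts1} ensures that every $z \in \mathcal{B}^t_\eps(R,\nu)$ satisfies
$$\mu_z := \int_0^\eps \Mx_R|\Hess_h|(\phi_u(z))\, du \le \nu.$$
After reparametrizing $\phi_s$ as the time-one flow of $-\eps \nabla h$, Corollary~\ref{cor-good-pts} therefore applies with any basepoint $p_0 \in \mathcal{B}^t_\eps(R,\nu)$ and $\mu = \mu_{p_0} \le \nu$.

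\emph{Part~(i).} Fix $x, y \in \mathcal{B}^t_\eps(R,\nu)$, set $d := d(x,y)$, and (after reducing to the case $d \le R/100$, the complementary case being handled at a fixed scale comparable to $R$) let $r := 10d$ and $\rho := C\nu^{1/n} r$. Apply Corollary~\ref{cor-good-pts} with $p_0 = x$ at scale $r$ to obtain a subset $B_r(x)'' \subset B_r(x)$ of relative volume $\ge 1 - C\nu$ on which $|d(\phi_s(p),\phi_s(q)) - d(p,q)| \le Cr\nu$. Apply Corollary~\ref{cor-good-pts} two more times at the smaller scale $\rho$, once with $p_0 = x$ and once with $p_0 = y$, producing good sets $A_x \subset B_\rho(x)$ and $A_y \subset B_\rho(y)$ of relative volume $\ge 1-C\nu$ with $\phi_s(A_x) \subset B_{2\rho}(\phi_s(x))$ and $\phi_s(A_y) \subset B_{2\rho}(\phi_s(y))$. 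A Bishop--Gromov volume count gives $A_x \cap B_r(x)'' \ne \emptyset$ and $A_y \cap B_r(x)'' \ne \emptyset$ for $C$ sufficiently large: the bad portions contribute at most $C\nu \vol B_r(x)$, whereas $\vol B_\rho(x) \ge c(\rho/r)^n \vol B_r(x) = cC^n\nu\,\vol B_r(x)$ dominates. Choose $x'$ and $y'$ in these intersections. The small-scale good sets give $d(\phi_s(x),\phi_s(x')), d(\phi_s(y),\phi_s(y')) \le 2\rho$, while $x', y' \in B_r(x)''$ gives $|d(\phi_s(x'),\phi_s(y')) - d(x',y')| \le Cr\nu$. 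Combined with $|d(x',y') - d(x,y)| \le 2\rho$, the triangle inequality yields
$$|d(\phi_s(x),\phi_s(y)) - d(x,y)| \le 6\rho + Cr\nu \le C'\nu^{1/n}\, d(x,y),$$
proving~(i).

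\emph{Part~(ii).} For $x \in \mathcal{B}^t_\eps(R,\nu)$, choose a sequence $y_k \in \mathcal{B}^t_\eps(R,\nu)$ with $y_k \to \gamma(t)$; such points exist densely by~\eqref{eq:hess-bnd-most-pts2} and Bishop--Gromov. Passing to the limit in~(i) and using continuity of $\phi_s$ gives
$$d(\phi_s(x), \phi_s(\gamma(t))) \le (1+C\nu^{1/n})\, d(x,\gamma(t)) \le (1+C\nu^{1/n})\,R.$$
Since $e(\gamma(t)) = 0$, the trajectory-comparison estimate~\cite[Lemma~2.20(3)]{CoNa} applied at $\gamma(t)$ yields $d(\phi_s(\gamma(t)),\gamma(t-s)) \le C\sqrt{\eps}\,R \le C\nu^2 R$, and combining gives $\phi_s(x) \in B_{(1+C\nu^{1/n})R}(\gamma(t-s))$, proving~(ii).

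The main technical obstacle is verifying non-emptiness of the multi-scale good-set intersections via Bishop--Gromov. This is precisely where the H\"older exponent $1/n$ enters: the small concentric ball $B_\rho(x)$ dominates the bad set at scale $r$ only when $\rho \ge C\nu^{1/n} r$, forcing the approximation error $d(x,x')$ (and hence the final distortion estimate) to be of order $\nu^{1/n}r$ rather than the linear $\nu$ one would expect from Corollary~\ref{cor-good-pts} alone.
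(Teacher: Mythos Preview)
Your argument for part~(i) is correct and follows the same scheme as the paper's: a two--scale application of Corollary~\ref{cor-good-pts} combined with a Bishop--Gromov volume count to guarantee the various good sets intersect. Your organization is in fact slightly cleaner than the paper's: you place both $x$ and $y$ inside a single large ball $B_r(x)$ with $r=10d$, pick approximants $x',y'\in B_r(x)''$, and read off the two--sided distortion bound at once, whereas the paper takes $r=\tfrac12 d(x,y)+r_1$, works with two balls $B_r(x)''$, $B_r(y)''$ meeting at an auxiliary point $z$, and treats the upper and lower Lipschitz bounds by separate triangle--inequality chains. (The reduction ``$d\le R/100$ versus $d$ comparable to $R$'' is harmless; the paper has the same scale issue and it is absorbed into the choice of the maximal--function radius.)

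Part~(ii), however, has a genuine gap. Two steps fail:
\begin{itemize}
\item The density claim is not justified. Inequality~\eqref{eq:hess-bnd-most-pts2} only says $\mathcal{B}^t_\eps(R,\nu)$ has relative volume $\ge 1-C\nu$ in $B_R(\gamma(t))$; this gives $C\nu^{1/n}R$--density, not density, and in particular does not produce a sequence $y_k\in\mathcal{B}^t_\eps(R,\nu)$ with $y_k\to\gamma(t)$. Applying \eqref{eq:hess-bnd-most-pts2} at smaller scales $r_k\to 0$ does not help either, since $\mathcal{B}^t_\eps(r_k,\nu)$ is defined with $\Mx_{r_k}$ and is \emph{larger} than $\mathcal{B}^t_\eps(R,\nu)$, not contained in it.
\item Even granting the limit, the bound $d(\phi_s(\gamma(t)),\gamma(t-s))\le C\sqrt{\eps}\,R$ does not follow from \cite[Lemma~2.20(3)]{CoNa}. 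That lemma controls $\int_0^s|\nabla h-\nabla d^-|(\psi_u(x))\,du$, i.e.\ the velocity defect along the $\psi$--flow. Converting this into a separation estimate between the $\phi$-- and $\psi$--trajectories via Lemma~\ref{lem:dist-growth} introduces an extra term $\int_0^s\int_{\gamma_{\phi_u(\gamma(t)),\gamma(t-u)}}|\Hess_h|$, for which no pointwise control is available.
\end{itemize}
The paper avoids both issues by repeating the two--scale trick already used in part~(i): one picks $z\in\mathcal{B}^t_\eps(R,\nu)\cap\mathcal{B}^t_\eps(C\nu^{1/n}R,\nu)$, nonempty by the same Bishop--Gromov count. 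Membership in the second set gives $d(\phi_s(z),\gamma(t-s))\le 2C\nu^{1/n}R$ \emph{directly from the definition} of $\mathcal{B}^t_\eps(r)$, with no appeal to trajectory comparison; then part~(i) applied to $x$ and $z$ plus the triangle inequality finishes the proof.
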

\begin{proof}
Since  $\eps\le \frac{\nu^4}{C^2(n,\delta) }$ in  \eqref {eq:hess-bnd-most-pts1} we have  $ \frac {C(n,\delta) \sqrt\eps} {\nu}\le\nu$  and therefore by the definition of $\mathcal{B}^t_\eps(R,\nu)$ for all $x\in \mathcal{B}^t_\eps(R,\nu)$ it holds
\begin{equation}
\int_0^\eps \MxR |\Hess_h(\phi_s(x))|\,ds\le\nu\,.
\end{equation} This means that we can apply Corollary \ref{cor-good-pts} at all such points with $\lambda=\lambda(n,\delta)$ and $\mu=\nu$.
Let $C_1(n,\lambda(n,\delta))=C_1(n,\delta)$ be the constant provided by Corollary \ref{cor-good-pts}.

Let $x,y\in \mathcal{B}^t_\eps(R,\nu)$.  Let $r_1=C_2(n,\delta)\nu^{1/n} d(x,y)$ and set $r=0.5d(x,y)+r_1$. Using Corollary \ref{cor-good-pts}, by Bishop-Gromov we can be assured that $B_r(x)''\cap B_r(y)''\ne \varnothing$ provided $C_2(n,\delta)\gg C_1(n,\delta)$  and $\nu$ is chosen  small enough. Pick any $z\in B_r(x)''\cap B_r(y)''\ne \varnothing$.  Likewise, using  Bishop-Gromov we can find  $x_1\in B_r(x)''\cap B_{r_1}(x)$ and  $y_1\in B_r(y)''\cap B_{r_1}(y)$.

Therefore, by  Corollary \ref{cor-good-pts}  we have
\begin{align}
 &d(\phi_s(x), \phi_s(x_1))\le 2d(x,x_1)\le   C(n,\delta)\nu^{1/n}d(x,y),
 \\
 &d(\phi_s(y), \phi_s(y_1))\le 2d(y,y_1)\le C(n,\delta)\nu^{1/n}d(x,y),
 \\
 &d(\phi_s(x_1),\phi_s(z))\le d(x_1,z)+C(n,\delta)\nu r\le (0.5+C(n,\delta)\nu^{1/n})d(x,y)
\end{align}
and
\begin{align}
 d(\phi_s(y_1),\phi_s(z))\le d(y_1,z)+C(n,\delta)\nu r\le (0.5+C(n,\delta)\nu^{1/n})d(x,y).
\end{align}
Summing up the above inequalities and using the triangle inequality we get
\begin{align}
  d(\phi_s(x),\phi_t(y))
  &\le d(\phi_s(x),\phi_s(x_1))+d(\phi_s(x_1),\phi_s(z)) +d(\phi_s(z),\phi_s(y_1))+d(\phi_s(y_1),\phi_t(y))
  \notag \\
  &\le C(n,\delta)\nu^{1/n}d(x,y)+(0.5+C(n,\delta)\nu^{1/n})d(x,y)
  \notag\\ 
  &\qquad+(0.5+C(n,\delta)\nu^{1/n})d(x,y)+C(n,\delta)\nu^{1/n}d(x,y)
  \notag \\
  &\le (1+C(n,\delta))\nu^{1/n})d(x,y).
\end{align}
  This shows that $\phi_s$ is $(1+C(n,\delta))\nu^{1/n})$-Lipschitz on  $\mathcal{B}^t_\eps(R,\nu)$.

Next, let us show  that it's Bilipschitz.
As before, using Bishop-Gromov, we can find $y_2\in B_{d(x,y)-r_1}(x)''\cap B_{2r_1}(y)''$ and $x_2\in  B_{r_1}(x)''\cap  B_{d(x,y)-r_1}(x)''$. This implies that
\begin{align}
  &d(\phi_s(y_2),\phi_s(y))\le 4r_1\le C(n,\delta)\nu^{1/n} d(x,y),
  \\
  &d(\phi_s(x_2),\phi_s(x))\le 2r_1\le C(n,\delta)\nu^{1/n} d(x,y)
\end{align}
and
\begin{align}
  d(\phi_s(x_2),\phi_s(y_2))) \ge d(x_2,y_2) -C(n,\delta)\nu d(x,y) \ge (1-C(n,\delta)\nu^{1/n})d(x,y).
\end{align}
By the triangle inequality this yields
\begin{align}
  d(\phi_s(x),\phi_s(y)))
  &\ge (1-C(n,\delta)\nu^{1/n})d(x,y)-C(n,\delta))\nu^{1/n} d(x,y)-C(n,\delta)\nu^{1/n}d(x,y)
  \notag \\
  &\ge (1-C(n,\delta)\nu^{1/n})d(x,y),
\end{align}
which finally proves part \eqref{bilip-i}  of  Lemma~\ref{lem-bilip}.

Let us prove part \eqref{holder-incl-ii}.

Recall that by Lemma~\ref{lem: distort2}  and \eqref{eq:hess-bnd-most-pts2} we have that
\begin{align}
 \frac{\vol\mathcal{B}^t_\eps(R,\nu)}{\Vol(B_R(\gamma(t)))} \geq 1-C(n,\delta)\,\nu\, .
\end{align}

Therefore, By Bishop-Gromov we can find $z\in  \mathcal{B}^t_\eps(R,\nu)\cap  \mathcal{B}^t_\eps((C(n,\delta)\nu^{1/n}R,\nu)$. Since by construction we have that  $\mathcal{B}^t_\eps((C(n,\delta)\nu^{1/n}R,\nu)\subset  \mathcal{B}^t_\eps((C(n,\delta)\nu^{1/n}R)$, by Lemma ~\ref{lem: distort2} we have that $d(\phi_s(z),\gamma(t-s))\le 2 C(n,\delta)\nu^{1/n}R$. By part  \eqref{bilip-i},   for any $x\in  \mathcal{B}^t_\eps(R,\nu)$ we also have that $d(\phi_s(z),\phi_s(x))\le (1+(C(n,\delta)\nu^{1/n})d(x,z)\le (1+(C(n,\delta)\nu^{1/n})R$. Applying the triangle inequality we get that $d(\phi_s(x),\gamma(t-s))\le (1+(C(n,\delta)\nu^{1/n})R$. This yields \eqref{eq:holder-incl} and  finishes the proof of Lemma~\ref{lem-bilip} and hence of Theorem~\ref{thm-main1}.

\end{proof}

\small
\bibliographystyle{alpha}

\end{document}